\date{}
\newtheorem{df}{Definition}[section]
\newtheorem{pro}{Proposition}[section]
\newtheorem{thm}{Theorem}[section]
\newtheorem{cor}{Corollary}[section]
\newtheorem{ex}{Example}[section]
\newtheorem{lem}{Lemma}[section]
\newtheorem{rem}{Remark}[section]
\renewcommand\section{\@startsection {section}{1}{\z@}
{-30pt \@plus -1ex \@minus -.2ex} {2.3ex \@plus.2ex}
{\normalfont\normalsize\bfseries}}
\renewcommand\subsection{\@startsection{subsection}{2}{\z@}
{-3.25ex\@plus -1ex \@minus -.2ex} {1.5ex \@plus .2ex}
{\normalfont\normalsize\bfseries}}
\renewcommand{\@seccntformat}[1]{\csname the#1\endcsname. }
\begin{document}
\title{Armendariz ring with weakly semicommutativity}
\author{\small{Sushma Singh and Om Prakash} \\ \small{Department of Mathematics} \\ \small{Indian Institute of Technology Patna, Bihta} \\ \small{Patna, INDIA- 801 106} \\  \small{sushmasingh@iitp.ac.in \& \; om@iitp.ac.in }}

\maketitle
\begin{abstract}
In this article, we introduce the weak ideal-Armendariz ring which combines Armendariz ring and weakly semicommutative properties of rings. In fact, it is a generalisation of an ideal-Armendariz ring. We investigate some properties of weak ideal Armendariz rings and prove that $R$ is a weak ideal-Armendariz ring if and only if $R[x]$ is weak ideal-Armendariz ring. Also, we generalise weak ideal-Armendariz as strongly nil-IFP and a number of properties are discussed which distinguishes it from other existing structures. We prove that if $I$ is a semicommutative ideal of a ring $R$ and $\frac{R}{I}$ is a strongly nil-IFP, then $R$ is strongly nil-IFP. Moreover, if $R$ is 2-primal, then $R[x]/<x^{n}>$ is a strongly nil-IFP.
\end{abstract}

\noindent {\it Mathematical subject classification} : 16N40, 16N60, 16U20, 16Y99.\\
\noindent {\it \textbf{Key Words}} : Armendariz ring, weak Armendariz ring, semicommutative ring, weakly semicommutative ring, ideal-Armendariz ring, weak ideal-Armendariz ring, strongly nil-IFP.

%%%%%%%%%%%%%%%%%%%%%%%%%%%%%%%%%%%%%%%%%%%%%%%%%%%%%%%%%%%%%%%%%%%%%%%%%%%%%%%%%%%%%%%%%%%%%%%%%%%%%%%%%%%%%%%%%%%%%%%%%%%%%%%%%%%%%%%%%%%%%%%%%%
\section{INTRODUCTION}
Throughout this article, $R$ denotes an associative ring with identity, otherwise it is mentioned and $R[x]$ is a polynomial ring over $R$ with an indeterminate $x$. For any polynomial $f(x)$, $C_{f(x)}$ denotes the set of all coefficients of $f(x)$. $M_{n}(R)$ and $U_{n}(R)$ denote the $n\times n$ full matrix ring and upper triangular matrix ring over $R$ respectively. $D_{n}(R)$ is the ring of $n\times n$ upper triangular matrices over $R$ whose diagonal entries are equal and $V_{n}(R)$ is ring of all matrices $(a_{ij})$ in $D_{n}(R)$ such that $a_{pq} = a_{(p+1)(q+1)}$ for $p = 0, 1, 2, \ldots, n-1$, $q = 0, 1, 2, \ldots, n-1$.  We use $e_{ij}$ to denote the matrix with $(i, j)$th-entry $1$ and elsewhere $0$. $\mathbb{Z}_{n}$ is the ring of residue classes modulo a positive integer $n$ and $GF(p^{n})$ denotes the Galois field of order $p^{n}$. Here, $N_{*}(R),~ N(R)$ and $J(R)$ represent the prime radical (lower nil radical), set of all nilpotent elements and Jacobson radical of the ring $R$ respectively. $R^{+}$ represents the additive abelian group $(R, +)$ and cardinality of a given set $S$ is denoted by $|S|$.\\
Recall that a ring $R$ is reduced if it has no non-zero nilpotent element. It was Armendariz [\cite{E}, Lemma 1] who initially observed that a reduced ring always satisfies the below condition: \\
\emph{If $f(x)$ and $g(x)\in R[x]$ satisfies $f(x)g(x) = 0$, then $ab = 0$ for each $a \in C_{f(x)}$ and $b \in C_{g(x)}$}.\\
In 1997, Rege and Chhawchharia \cite{M} had coined the term Armendariz for those rings which are satisfying above condition. Clearly, subring of an Armendariz ring is an Armendariz.
 A ring $R$ is said to be an abelian ring if every idempotent element is central. It is known that Armendariz ring is an abelian ring\cite{C}.\\
In 2006, Liu et al. generalised the Armendariz ring which is called weak Armendariz\cite{Z} and defined as if two polynomials $f(x)$ and $g(x)\in R[x]$ such that $f(x)g(x) = 0$ implies $ab \in N(R)$ for each $a \in C_{f(x)}$ and $b \in C_{g(x)}$ .
  A ring $R$ is reversible if, for any  $a, b \in R, ab =0$ implies $ba = 0$ \cite{P}. Later, by using the concept of reversible, D. W. Jung et. al \cite{DD} introduced quasi-reversible-over-prime-radical $(QRPR)$. A ring $R$ is said to be $(QRPR)$ if $ab = 0$ implies $ba \in N_{*}(R)$ for $a, b \in R$.\\
A right (left) ideal $I$ of a ring $R$ is said to have the insertion-of-factor-property (IFP) if $ab \in I$ implies $aRb \subseteq I$ for $a, b \in R$. A ring $R$ is said to be an IFP if the zero ideal of $R$ has the IFP Property \cite{H}. IFP rings are also known as semicommutative rings \cite{LL}. In 1992, Birkenmeier et. al \cite{G},  are called a ring $R$ is 2-primal if and only if  $N_{*}(R) = N(R)$. It can be easily seen that semicommutative rings are $2$- primal. A ring $R$ is called an $NI$ ring if $N^{*}(R) = N(R)$ \cite{GG}.\\
In 2012, Kwak \cite{TT}, studied Armendariz ring with IFP and introduced ideal-Armendariz ring. He proved that a ring $R$ is Armendariz and IFP if and only if $f(x)g(x) = 0$ implies $aRb = 0$, for each $a \in C_{f(x)}$ and $b \in C_{g(x)}$.\\
Later, in 2007, Liang \cite{L} introduced the weakly semicommutative ring which satisfy for any $a, b \in R$, $ab = 0$ implies $arb\in N(R)$ for each $r \in R$. By definition, it is clear that any semicommutative ring is weakly semicommutative ring but converse is not true. Also, for a reduced ring $R$, $D_{n}(R)$ is weakly semicommutative ring for $n\geq 4$ but it is not semicommutative. Therefore, $D_{n}(R)$ is a weakly semicommutative but by Example $3$ of \cite{N}, for $n \geq 4$, it is not an Armendariz ring. Moreover, by Example(4.8) of \cite{RR}, if $K$ is a field and $R = K[a, b]/<a^{2}>$, then $R$ is an Armendariz ring. In this ring,  $(ba)a = 0$ but $(ba)b(a)$ is not nilpotent. Therefore, $R$ is not a weakly semicommutative ring. Thus, Armendariz ring and weakly semicommutative ring do not imply each other.\\

In light of above study, we introduced the concept of weak ideal-Armendariz ring and Strongly nil-IFP in Section 2 and Section 3 respectively. The below flow chart will also give an idea about relation of these two notions with other existing structures:\\

\begin{tikzcd}
Ideal-Armendariz
\arrow[drr, bend left,]
\arrow[ddr, bend right,]
\arrow[dr,  description] & & \\
& Weak ~ideal-Armendariz  \arrow[d, ] \arrow[dr, ]
& IFP \arrow[d, ] \\
& Strongly ~nil-IFP \arrow[r, ] \arrow[d, ]
& Weakly~semicommutative
&  \\
& Weak Armendariz
\end{tikzcd}

\section{WEAK IDEAL ARMENDARIZ RINGS}
\begin{df}{Weak Ideal Armendariz Rings} A ring $R$ is said to be a weak ideal-Armendariz ring if $R$ is an Armendariz ring and weakly semicommutative.
\end{df}
It is clear that every ideal-Armendariz ring is a weak ideal-Armendariz but converse is not true. Towards this, we have the following example:\\

\begin{ex} Let $F$ be a field and $A = F[a, b, c]$ be a free algebra of polynomials with constant terms zero in noncommuting indeterminates $a, b, c$ over $F$. Then $A$ is a ring without identity. Consider an ideal $I$ of $F+A$ generated by $cc, ac, crc$ for all $r \in A$. Let $R = (F+A)/I$. From Example 14 of \cite{C}, $R$ is an Armendariz but it is not a semicommutative ring because $ac \in I$ but $abc \notin I$. We denote $a+I$ by $a$. In order to show $R$ is a weakly semicommutative ring, we have the following cases: \\
\textbf{Case (1)} $ac \in I$, \textbf{Case (2)} $cc \in I$,\textbf{Case (3)} $crc \in I$.\\
\textbf{Case (1)} If $ac \in I$ for $a, c \in A$, then $(asc)^{2} \in I$ for each $s \in A$. Consider, $atc = a(\alpha+t^{'})c$, where $t = \alpha+t^{'} \in F+A$. Then $(atc)^{2} \in I$. Since, $r_{1}acr_{2} \in I$, therefore  $(r_{1}arcr_{2})^{2}\in I$ for any $r_{1}, r_{2}, r \in A$. Similarly, we can prove that $(r_{1}atcr_{2})^{2}\in I$ for any $r_{1}, r_{2},  \in A$, and  $t\in F+A$. Again, we have $(r_{1}acr_{2})\in I$ for any $r_{1}, r_{2} \in F+A$ and this implies $(r_{1}arcr_{2})^{2} \in I$ for any $r\in A$ as well as $(r_{1}atcr_{2})^{2} \in I$ for $t \in F+A$. \\
 Similarly, rest cases can be easily checked.
\end{ex}

\begin{rem}
\begin{itemize}
\item[$(1)$] Every reduced ring is a weak ideal-Armendariz ring.
\item[$(2)$] Subring of a weak ideal-Armendariz ring is a weak ideal-Armendariz ring.
\item[$(3)$] If $R$ is a reduced ring, then $D_{3}(R)$ is a weak ideal-Armendariz ring.
\item[$(4)$]  For any ring $R$ and $n\geq 2, $ $U_{n}(R)$ and $M_{n}(R)$ are not Armendariz by \cite{N} and \cite{M} respectively. Hence, they are not weak ideal-Armendariz rings.
\item[$(5)$] By Example $3$ of \cite{N}, for any ring $A$, $D_{n}(A)$ for $n\geq 4$ is not an Armendariz ring. Hence, it is not a weak ideal Armendariz ring. But, when $A$ is a reduced ring, then $D_{n}(A)$ is a weakly semicommutative ring.
\item[$(6)$] It is nevertheless to say that polynomial ring over weakly semicommutative ring need not be weakly semicommutative. Therefore, there is a huge scope to extend and study the polynomial rings over weakly semicommutative rings with the Armendariz condition.
    \end{itemize}
\end{rem}
\begin{pro} A ring $R$ is weak ideal-Armendariz ring if and only if $R[x]$ is weak ideal-Armendariz ring.
\end{pro}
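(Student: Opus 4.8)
The plan is to prove the two implications separately and, for the forward direction, to split the weak ideal-Armendariz property into its two defining ingredients from Definition 2.1, namely the Armendariz condition and weak semicommutativity, and verify each for $R[x]$.

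For the easy direction, I observe that $R$ embeds in $R[x]$ as the constant polynomials. Hence if $R[x]$ is weak ideal-Armendariz, then by Remark 2.1(2) its subring $R$ is again weak ideal-Armendariz. So the whole content lies in the forward implication. There I would first dispose of the Armendariz half, using the classical fact that $R$ is Armendariz if and only if $R[x]$ is Armendariz: given $F(y)G(y)=0$ in $R[x][y]$ with $F=\sum_i f_i(x)y^i$ and $G=\sum_j g_j(x)y^j$, one substitutes $y=x^k$ with $k$ exceeding all $x$-degrees so that the coefficient windows of the $f_i$ and $g_j$ do not overlap; this turns the relation into a single vanishing product in $R[x]$, to which the Armendariz condition of $R$ applies, and reassembling yields $f_ig_j=0$ for all $i,j$. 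I invoke this to conclude that $R[x]$ is Armendariz.

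The heart of the argument is to show that $R[x]$ is weakly semicommutative. Let $F,G\in R[x]$ with $FG=0$, say $F=\sum_i a_ix^i$ and $G=\sum_j b_jx^j$, and fix $H=\sum_k c_kx^k$. Since $R$ is Armendariz, $FG=0$ forces $a_ib_j=0$ for all $i,j$, and since $R$ is weakly semicommutative this upgrades to $a_irb_j\in N(R)$ for every $r\in R$; in particular each block $a_ic_kb_j$ lies in $N(R)$, so every coefficient of $FHG$ is a sum of nilpotents. Two computations then drive the proof. First, the relations $a_ib_j=0$ give $(GF)^2=0$ in $R[x]$, because every coefficient of $(GF)^2$ is a sum of terms $b_{j_1}a_{i_1}b_{j_2}a_{i_2}$ containing the adjacent factor $a_{i_1}b_{j_2}=0$. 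Second, writing $Z=GF$ one obtains by induction the telescoping identity $(FHG)^m=FH(ZH)^{m-1}G$, which reduces the nilpotency of $FHG$ to controlling the interlaced products $ZHZH\cdots$ in which the square-zero factors $Z$ are separated by copies of $H$.

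The main obstacle is precisely the passage from ``every coefficient of $FHG$ lies in $N(R)$'' to ``$FHG\in N(R[x])$,'' since a sum or product of nilpotent elements need not be nilpotent; indeed, Remark 2.1(6) warns that weak semicommutativity alone does not ascend to $R[x]$, so the Armendariz hypothesis must be used essentially here. I expect to resolve this with an auxiliary lemma describing $N(R[x])$ for an Armendariz weakly semicommutative ring, showing that the relation $Z^2=0$ together with $a_irb_j\in N(R)$ forces the interlaced blocks in $(FHG)^m=FH(ZH)^{m-1}G$ to be annihilated for $m$ large, so that $FHG$ is genuinely nilpotent. Once this lemma is in place, the Armendariz and weakly semicommutative halves combine to show that $R[x]$ is weak ideal-Armendariz, which together with the subring argument completes the equivalence.
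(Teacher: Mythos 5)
Your overall architecture coincides with the paper's: the Armendariz half transfers to $R[x]$ by Anderson--Camillo, the converse is the subring observation, and the substance is showing that $R[x]$ is weakly semicommutative, which you begin exactly as the paper does ($FG=0$ forces $a_ib_j=0$ by the Armendariz hypothesis, whence $a_ic_kb_j\in N(R)$ by weak semicommutativity of $R$). You also correctly isolate the decisive difficulty: each coefficient of $FHG$ is only a \emph{sum} of nilpotent elements, and even a polynomial all of whose coefficients are nilpotent need not be nilpotent in $R[x]$.

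That decisive step, however, is precisely where your proposal stops being a proof. You defer it to an unnamed ``auxiliary lemma,'' and the concrete device you offer in its place does not work: writing $Z=GF$ (so indeed $Z^2=0$) and $(FHG)^m=FH(ZH)^{m-1}G$ is correct, but $ZHZ=G(FHG)F$, so the interlaced products $(ZH)^{m-1}$ merely reintroduce powers of $FHG$ --- the reduction is circular, and $Z^2=0$ buys nothing once the factors of $Z$ are separated by copies of $H$. The paper closes exactly this gap by invoking Antoine's structural results on nilpotent elements of Armendariz rings \cite{RR}: for an Armendariz ring $N(R)$ is closed under addition (so that $f(x)R[x]g(x)\subseteq N(R)[x]$ makes sense coefficientwise) and, by Lemma 2.6 there, $N(R)[x]\subseteq N(R[x])$. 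Some such external input about $N(R)$ for Armendariz rings is genuinely needed here --- it is not a formal manipulation one can telescope around --- so as written your proposal has a real gap at the only hard point of the proposition.
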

\begin{proof} Since, $R$ is Armendariz ring if and only if $R[x]$ is Armendariz ring\cite{D}. In order to prove the result, we show that $R$ is weakly semicommutative ring if and only if $R[x]$ is weakly semicommutative. For this, let  $f(x)$ and $g(x) \in R[x]$ such that $f(x)g(x) = 0$. Since $R$ is Armendariz, therefore $ab = 0$ for each $a \in C_{f(x)}$ and $b \in C_{g(x)}$. Since $R$ is weakly semicommutative ring, therefore $arb \in N(R)$ for each $r \in R$. Hence, $f(x) R[x] g(x) \subseteq N(R)[x]$. By Lemma (2.6) of \cite{RR}, we have $N(R)[x]\subseteq N(R[x])$, hence $f(x)R[x]g(x) \subseteq N(R[x])$. Thus, $R[x]$ is weak ideal-Armendariz ring.
Also, $R$ being subring of $R[x]$, converse is true.
\end{proof}

\begin{pro} Let $N$ be a nil ideal of a ring $R$. If $|N| = p^{2}$, where $p$ is a prime, then $N$ is a commutative Armendariz ring without identity such that $N^{3} = 0$.
\end{pro}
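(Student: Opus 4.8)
The plan is to pin down the additive structure of $N$ first, then to establish $N^{3}=0$, commutativity, and the Armendariz property in that order; the absence of an identity is immediate, since $N$ is nil and nonzero, so a would-be identity $e$ would satisfy $e=e^{n}=0$ for some $n$, forcing $N=eN=0$, a contradiction. Because $|N|=p^{2}$, the group $(N,+)$ is either cyclic $\mathbb{Z}_{p^{2}}$ or elementary abelian $\mathbb{Z}_{p}\times\mathbb{Z}_{p}$, and I would carry both cases throughout, since several steps behave differently in them. Note that $N$, being an ideal, is itself a (non-unital) finite nil ring.

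For $N^{3}=0$: a finite nil ring is Artinian and hence nilpotent, so the descending chain of additive subgroups $N\supseteq N^{2}\supseteq N^{3}\supseteq\cdots$ terminates at $0$. The key observation is that the chain is \emph{strictly} decreasing until it reaches $0$: if $N^{i}=N^{i+1}\neq 0$, then $N^{i}=N^{i}N=N^{i+1}N=N^{i+2}=\cdots$ would never vanish, contradicting nilpotency. Each $N^{i}$ is a subgroup of $N$, so by Lagrange its order lies in $\{1,p,p^{2}\}$; a strictly decreasing chain starting at $p^{2}$ therefore reaches $1$ after at most two steps, i.e. $N^{3}=0$.

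Commutativity I would split along the additive type. If $(N,+)$ is cyclic with generator $a$, every element is an integer multiple of $a$ and $(ka)(la)=kl\,a^{2}=(la)(ka)$, so $N$ is automatically commutative. If $(N,+)\cong\mathbb{Z}_{p}\times\mathbb{Z}_{p}$, view $N$ as a $2$-dimensional $\mathbb{F}_{p}$-algebra and use $N^{3}=0$: the case $N^{2}=0$ is immediate, and otherwise $|N^{2}|=p$, so writing $N=\mathbb{F}_{p}v\oplus N^{2}$ one sees that any product of $v$ with an element of $N^{2}$ vanishes (it lies in $N^{3}=0$), whence for $x=\alpha v+u_{1}$, $y=\gamma v+u_{2}$ we get $xy=\alpha\gamma v^{2}=yx$. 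The hypothesis that $N$ is nil is genuinely needed here: there is a non-commutative (non-nil) ring of order $p^{2}$ with a one-sided identity, so commutativity cannot follow from the order alone.

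The main obstacle is the Armendariz property. If $N^{2}=0$ it is trivial, since every $a_{i}b_{j}=0$; so assume $N^{2}\neq 0$, whence $|N^{2}|=p$ by the chain above, and fix a generator $u$ of $N^{2}$. Writing the (commutative) product as $xy=B(x,y)\,u$ defines a symmetric bilinear form $B:N\times N\to\mathbb{F}_{p}$, and $N\cdot N^{2}=N^{3}=0$ forces $u$ into the radical of $B$, so $B$ has rank $1$; the delicate point I expect to spend effort on is the claim that a rank-$1$ symmetric form over a field is a scalar multiple of a square, $B(x,y)=\ell(x)\ell(y)$ for a surjective additive map $\ell:N\to\mathbb{F}_{p}$ (after absorbing the scalar into $u$), uniformly over both additive types and including characteristic $2$. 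Granting the factorization $a_{i}b_{j}=\ell(a_{i})\ell(b_{j})\,u$, if $f(x)=\sum_{i}a_{i}x^{i}$ and $g(x)=\sum_{j}b_{j}x^{j}$ satisfy $f(x)g(x)=0$, then comparing coefficients and using that $u$ has additive order $p$ gives $\sum_{i+j=k}\ell(a_{i})\ell(b_{j})=0$ in $\mathbb{F}_{p}$ for every $k$; hence the polynomials $\sum_{i}\ell(a_{i})x^{i}$ and $\sum_{j}\ell(b_{j})x^{j}$ multiply to $0$ in the integral domain $\mathbb{F}_{p}[x]$, so one of them is $0$. Thus all $\ell(a_{i})=0$ or all $\ell(b_{j})=0$, and in either case $a_{i}b_{j}=\ell(a_{i})\ell(b_{j})\,u=0$ for all $i,j$, which is precisely the Armendariz condition.
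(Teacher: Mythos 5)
Your proposal is correct, but it reaches the conclusion by a genuinely different route from the paper. The paper gets $N^{3}=0$ by citing Kruse--Price (a nilpotent ring of order $p^{n}$ has exponent at most $n+1$), and then proves the Armendariz property by invoking the Kruse--Price classification of the non-cyclic case into the two explicit multiplication tables $a^{2}=b^{2}=ab=ba=0$ and $a^{2}=b,\ a^{3}=0$, together with a reduction to the Armendariz property of $\mathbb{Z}_{p^{2}}$ in the cyclic case; commutativity and the absence of an identity are left implicit in that classification. You instead obtain $N^{3}=0$ from the elementary observation that the chain $N\supseteq N^{2}\supseteq\cdots$ of additive subgroups must strictly decrease until it vanishes, so Lagrange forces it to die within two steps, and you replace the classification by the single structural fact that (when $N^{2}\neq 0$) the multiplication factors as $xy=\ell(x)\ell(y)u$ with $u$ a generator of $N^{2}\cong\mathbb{Z}_{p}$ and $\ell:N\to\mathbb{F}_{p}$ surjective additive; the Armendariz condition then drops out of $\mathbb{F}_{p}[x]$ being a domain. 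This is more self-contained and uniform, and it in fact shores up the paper's cyclic case, where the asserted reduction to $\mathbb{Z}_{p^{2}}[x]$ is only formal (the ring structure on $N$ is a rescaled multiplication $k*l=klc p$, not that of $\mathbb{Z}_{p^{2}}$). The one point to phrase carefully is the ``rank-$1$ symmetric form'' step: in the cyclic case $N$ is not an $\mathbb{F}_{p}$-vector space, so you should speak of a symmetric biadditive pairing $B:N\times N\to N^{2}\cong\mathbb{Z}_{p}$ whose radical has index $p$ (it contains $N^{2}$ by $N^{3}=0$ and is proper since $N^{2}\neq 0$); the induced pairing on the quotient $N/\mathrm{rad}(B)\cong\mathbb{Z}_{p}$ is $(\bar x,\bar y)\mapsto c\bar x\bar y$ with $c\neq 0$, and absorbing $c$ into $u$ gives exactly the factorization you want, explicitly $\ell(ka)=k\bmod p$ in the cyclic case and $\ell(\alpha v+u_{1})=\alpha$ in the elementary abelian case. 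With that rephrasing the ``delicate point'' you flag is fully resolved and the argument is complete.
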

\begin{proof} If $|N| = p^{2}$, then $N$ is nilpotent by Hungerford [Proposition 2.13 of \cite{T}], since $J(N) = N \cap J(R)$.\\
By Corollary $(1.3.11)$ of \cite{R}, if $R$ is nilpotent ring of order $p^{n}$, then exp$(R)\leq n+1$. Hence $N^{3} = 0$.

Suppose $N^{+}$ is cyclic. To prove $N$ is Armendariz, let $f(x), g(x) \in N[x]$ such that $f(x)g(x) = 0$. Then $f(x), g(x)$ can be written as $f(x) = af_{1}(x)$ and $g(x) = ag_{1}(x)$, where $f_{1}(x), g_{1}(x) \in \mathbb{Z}_{p^{2}}[x]$. But, we know that $\mathbb{Z}_{p^{2}}$ is Armendariz by Proposition (2.1) of \cite{M}. Therefore, $\gamma \delta = 0$, for each $\gamma \in C_{f(x)}$ and $\delta \in C_{g(x)}$. Hence $N$ is an Armendariz ring.\\
If $N^{+}$ is noncyclic. Then by Theorem 2.3.3 of \cite{R}, there is a basis $\{a, b\}$ for $N$ such that $char ~a = char ~b = p$ and one of the following conditions holds:\\ $(i)$ $a^{2} = b^{2} = ab = ba = 0$ $(ii)$ $a^{2} = b, a^{3} = 0$. \\
To prove $N$ is an Armendariz ring, let $f(x), g(x) \in N[x]$ such that $f(x)g(x) = 0$. If $N$ satisfies the case $(i)$ $a^{2} = b^{2} = ab = ba = 0$, then $N = \{0, a, 2a, \ldots, (p-1)a, b, 2b, \ldots, (p-1)b, a+b, a+2b, \ldots, a+(p-1)b, 2a+b, 2a+2b, \ldots, 2a+(p-1)b,\ldots, (p-1)a+b, (p-1)a+2b, \ldots, (p-1)a+(p-1)b \}$. In this case, $\gamma \delta = 0$ for each $\gamma \in C_{f(x)}$ and $\delta \in C_{g(x)}$, because product of any two elements of $N$ is zero.\\
When $N$ satisfies the condition (ii) $a^{2} = b$, $a^{3} = 0$, then $N = \{ 0, a, 2a, \ldots, (p-1)a, a^{2}, 2a^{2}, \ldots,\\ (p-1)a^{2}, a+a^{2}, a+2a^{2}, \ldots, a+(p-1)a^{2}, 2a+a^{2}, 2a+2a^{2}, \ldots, 2a+(p-1)a^{2}, (p-1)a+a^{2}, (p-1)a+2a^{2}, \ldots, (p-1)a+(p-1)a^{2}\}$. In this case, only possibility for the product of two non-zero elements of $N$ is zero in which one of elements in the product is from the set $\{ a^{2}, 2a^{2}, \ldots, (p-1)a^{2} \}$.  Here, we can write $f(x) =a^{m}f_{1}(x), ~g(x) = a^{n}g_{1}(x)$ and $f(x)g(x) = 0$
 when $m+n\geq 3$. Thus, $f(x)g(x) = 0$ implies $\gamma \delta = 0$, for each $\gamma \in C_{f(x)}$ and $\delta \in C_{g(x)}$.
\end{proof}

\begin{pro}
Let $K$ be an ideal of a ring $R$ such that every element of $R\setminus K$ is regular and $K^{2} = 0$. Then $R$ is a weak ideal-Armendariz ring.
\end{pro}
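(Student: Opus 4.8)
The plan is to check the two conditions in Definition 2.1 separately, namely that $R$ is weakly semicommutative and that $R$ is Armendariz. The observation I would use throughout is that $K$ contains no regular element: if $0\neq k\in K$ then $k^{2}=0$ by $K^{2}=0$, so $k$ is a zero divisor; hence every regular element of $R$ lies in $R\sm K$ (in particular $1\notin K$).

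For weak semicommutativity, suppose $ab=0$ and let $r\in R$ be arbitrary. If $a\notin K$ then $a$ is regular, so $ab=0$ forces $b=0$ and $arb=0\in N(R)$; symmetrically, if $b\notin K$ then $b$ is regular and $a=0$, again giving $arb=0$. The only remaining case is $a,b\in K$, and then $ba\in K^{2}=0$, so $(arb)^{2}=ar(ba)rb=0$, whence $arb\in N(R)$. Thus $R$ is weakly semicommutative.

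For the Armendariz property, the first step is to show that $R/K$ is a domain. Indeed, if $\bar x\bar y=0$ in $R/K$ with $x,y\notin K$, then $x,y$ are regular, so $xy$ is regular; moreover $xy\neq 0$ (as $x$ regular and $y\neq 0$), so $xy$ being in $K$ would make it a non-regular element, a contradiction. Hence $R/K$ has no zero divisors. Now take $f=\sum_{i}a_{i}x^{i}$ and $g=\sum_{j}b_{j}x^{j}$ with $fg=0$. Reducing modulo $K$ gives $\bar f\,\bar g=0$ in $(R/K)[x]$, which is again a domain, so $\bar f=0$ or $\bar g=0$; by symmetry assume $\bar f=0$, i.e. every $a_{i}\in K$. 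If moreover every $b_{j}\in K$, then $a_{i}b_{j}\in K^{2}=0$ and we are done, so suppose some $b_{j}\notin K$ and let $t$ be the least such index. The coefficient equations of $fg=0$ read $\sum_{i+j=k}a_{i}b_{j}=0$. I would then prove $a_{p}=0$ for all $p$ by induction on $p$: in the degree-$(t+p)$ equation every term with $i<p$ vanishes by the inductive hypothesis, every term with $i>p$ has $j<t$, hence $b_{j}\in K$ and $a_{i}b_{j}\in K^{2}=0$, leaving only $a_{p}b_{t}=0$; since $b_{t}$ is regular this yields $a_{p}=0$. Thus $f=0$ and all $a_{i}b_{j}=0$. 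The case $\bar g=0$ is handled by the mirror induction, run downward on the degree of $g$ and isolating $a_{s}b_{q}$, where $a_{s}$ is the top coefficient of $f$ outside $K$, using that $a_{s}$ is regular. Therefore $R$ is Armendariz, and together with the previous paragraph $R$ is a weak ideal-Armendariz ring.

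The main obstacle is this last step: lifting the information ``$a_{i}b_{j}\in K$ for all $i,j$'', which the domain $R/K$ supplies immediately, to the sharper ``$a_{i}b_{j}=0$''. The device that makes it work is that $K^{2}=0$ annihilates every product of two coefficients that both lie in $K$, so in each coefficient equation all but one term collapses and the single survivor can be cancelled by the regularity of a coefficient outside $K$; choosing the extreme index (smallest for $g$, largest for $f$) is exactly what guarantees a unique survivor at each stage. I would finally double-check that \emph{regular} is used in the two-sided non-zero-divisor sense, since one induction cancels a factor from the right and the other from the left.
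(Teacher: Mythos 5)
Your proof is correct, and it diverges from the paper's in one substantive way. For weak semicommutativity the two arguments are essentially the same: the paper passes to the domain $R/K$ to conclude $a\in K$ or $b\in K$ from $ab=0$, then uses $arb\in K$ and $K^{2}=0$; your case split on regularity reaches the identical conclusion $(arb)^{2}=0$. For the Armendariz property, however, the paper simply cites Proposition (3.7) of the Kim--Lee--Lee reference, whereas you reprove it from scratch: you establish that $R/K$ is a domain, reduce $fg=0$ modulo $K$ to force all coefficients of one factor into $K$, and then run an induction on the coefficient equations in which $K^{2}=0$ kills every cross term except a single product $a_{p}b_{t}$ (or $a_{s}b_{q}$) that is cancelled by the regularity of the extremal coefficient outside $K$. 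Your induction is sound --- the choice of the \emph{least} index $t$ with $b_{t}\notin K$ (resp.\ the \emph{greatest} index $s$ with $a_{s}\notin K$) is exactly what guarantees a unique surviving term, and your closing remark that regularity must be two-sided is the right thing to check, since one cancellation is on the right and the other on the left. What your route buys is a self-contained proof of the proposition; what the paper's route buys is brevity, at the cost of an external dependency. One small economy you could adopt from the paper: once you know $a\in K$ or $b\in K$, the single line $arb\in K$ (because $K$ is an ideal) already gives $(arb)^{2}\in K^{2}=0$, subsuming all three of your cases.
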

\begin{proof} Since each element of $R\setminus K$ is regular and $K^{2} = 0$, therefore by Proposition (3.7) of \cite{NNN}, $R$ is an Armendariz ring. To prove $R$ is weakly semicommutative ring, let $ab = 0$ in $R$. Then $ab + K = K$. Since $R/ K$ is a domain, so $a \in K$ or $b \in K$. Also, $arb \in K$ for any $r \in R$ and $K^{2} = 0$, therefore $(arb)^{2} = 0$. Hence, $R$ is a weakly semicommutative ring. Thus, $R$ is a weak ideal-Armendariz ring.
\end{proof}

Given a ring $R$ and a bimodule $_{R}M_{R}$, the trivial extension of $R$ by $M$ is the ring $T(R, M)$ with the usual addition and multiplication defined by
$$(r_{1}, m_{1})(r_{2}, m_{2}) = (r_{1}r_{2}, r_{1}m_{2}+m_{1}r_{2}).$$
This is isomorphic to the ring of all matrices of the form $\left(
                                                 \begin{array}{cc}
                                                   r & m \\
                                                   0 & r \\
                                                 \end{array}
                                               \right)$ with usual addition and multiplication of  matrices, where $r\in R$ and $m\in M$.

\begin{pro} For a ring $R$, the following are equivalent:
\begin{itemize}
\item[$(1)$]  $R$ is a reduced ring.
\item[$(2)$]  $R[x]/<x^{n}>$ is a weak ideal-Armendariz ring, where $<x^{n}>$ is the ideal generated by $x^{n}$ for any positive integer $n$.
\item[$(3)$]  The trivial extension $T(R, R)$ is a weak ideal-Armendariz ring.
\end{itemize}
\end{pro}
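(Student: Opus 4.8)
The plan is to establish the cycle $(1)\Rightarrow(2)\Rightarrow(3)\Rightarrow(1)$. The implication $(2)\Rightarrow(3)$ requires essentially no work: the assignment $a_{0}+a_{1}x+\langle x^{2}\rangle\mapsto(a_{0},a_{1})$ is a ring isomorphism $R[x]/\langle x^{2}\rangle\cong T(R,R)$ (it matches the multiplication displayed before the proposition, since $x^{2}=0$), so specializing $(2)$ to $n=2$ yields $(3)$ at once. Thus the real content lies in $(1)\Rightarrow(2)$ and $(3)\Rightarrow(1)$.

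For $(1)\Rightarrow(2)$, write $S=R[x]/\langle x^{n}\rangle$ and recall that ``weak ideal-Armendariz'' means ``Armendariz and weakly semicommutative.'' For the Armendariz half I would invoke the known fact that a reduced ring $R$ makes $R[x]/\langle x^{n}\rangle$ Armendariz (Anderson--Camillo); this is the one ingredient I would cite rather than reprove, as the coefficient bookkeeping for general $n$ is routine but long. For the weakly semicommutative half I would argue directly. First I would identify the nilpotent set: an element $f=\sum_{i=0}^{n-1}a_{i}\bar x^{i}$ of $S$ is nilpotent iff its constant term $a_{0}$ is nilpotent in $R$, which since $R$ is reduced forces $a_{0}=0$; conversely every element with $a_{0}=0$ lies in $\bar x S$ and is nilpotent, so $N(S)=\{f:a_{0}=0\}$. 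Now if $fg=0$ in $S$, comparing constant terms gives $a_{0}b_{0}=0$ in $R$, and for any $h\in S$ with constant term $c_{0}$ the constant term of $fhg$ is $a_{0}c_{0}b_{0}$. Because a reduced ring is semicommutative ($uv=0$ implies $vu=0$ by reducedness, whence $(uwv)^{2}=uw\,vu\,wv=0$ and so $uwv=0$), $a_{0}b_{0}=0$ gives $a_{0}c_{0}b_{0}=0$; thus $fhg$ has zero constant term, i.e. $fhg\in N(S)$. Hence $S$ is weakly semicommutative, completing $(1)\Rightarrow(2)$.

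For $(3)\Rightarrow(1)$ I would argue by contraposition, showing that if $R$ is not reduced then $T(R,R)$ is not even Armendariz. Pick $a\in R$ with $a\neq0$ and $a^{2}=0$, and consider in $T(R,R)[y]$ the polynomials
\[
f(y)=(a,0)+(0,1)\,y,\qquad g(y)=(-a,0)+(0,1)\,y .
\]
A direct multiplication gives $f(y)g(y)=(-a^{2},0)+\big[(0,a)+(0,-a)\big]y+(0,0)y^{2}=0$, using $a^{2}=0$. Yet $(a,0)\in C_{f(y)}$ and $(0,1)\in C_{g(y)}$ satisfy $(a,0)(0,1)=(0,a)\neq(0,0)$ since $a\neq0$. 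This violates the Armendariz condition, so $T(R,R)$ fails to be weak ideal-Armendariz, proving $(3)\Rightarrow(1)$. (The same idea with $(0,1)$ replaced by $\bar x^{\,n-1}$ shows directly that $R[x]/\langle x^{n}\rangle$ is non-Armendariz for non-reduced $R$ and every $n\ge2$, should one prefer to close the cycle through $(2)$ instead.)

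The step I expect to be the main obstacle is pinning down the right witnessing polynomials in $(3)\Rightarrow(1)$: naive choices built only from $a$ collapse to zero on both sides because $a^{2}=0$, so one must mix the nilpotent $a$ with the ``formal'' central element $(0,1)$ to keep one cross-product $(0,a)$ alive while still forcing the full product to vanish. Everything else is either a short computation (the weakly semicommutative verification and the nilradical identification) or a citation (the Armendariz half of $(1)\Rightarrow(2)$).
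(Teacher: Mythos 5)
Your proposal is correct, and its skeleton matches the paper's: cite Anderson--Camillo (Theorem 5 of the reference \cite{D}) for the Armendariz half of $(1)\Rightarrow(2)$, then handle weak semicommutativity, then close the cycle. The difference is that you actually supply the arguments the paper waves away. The paper's justification of the weakly semicommutative half reads ``reduced ring is weakly semicommutative, therefore $R[x]/\langle x^{n}\rangle$ is weak ideal-Armendariz,'' which is a non sequitur as written since $R[x]/\langle x^{n}\rangle$ is not reduced for $n\geq 2$; your identification of $N(S)$ as the elements with zero constant term and the constant-term computation for $fhg$ is exactly the missing argument. Likewise the paper declares $(2)\Rightarrow(3)$ and $(3)\Rightarrow(1)$ ``obvious,'' whereas you exhibit the isomorphism $R[x]/\langle x^{2}\rangle\cong T(R,R)$ and the explicit witness polynomials $f(y)=(a,0)+(0,1)y$, $g(y)=(-a,0)+(0,1)y$ (a standard trick, and your multiplication checks out). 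So: same route, but your version is the complete one.
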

\begin{proof} $(1)\Rightarrow(2)$: By Theorem 5 of \cite{D}, $R[x]/<x^{n}>$ is an Armendariz ring. Also, reduced ring is weakly semicommutative ring, therefore $R[x]/<x^{n}>$ is a weak ideal-Armendariz ring.
$(2)\Rightarrow (3)$ and $(3)\Rightarrow(1)$ are obvious.
\end{proof}

\begin{pro} Let $I$ be a reduced ideal of a ring $R$. If $R/I$ is a weak ideal-Armendariz ring, then $R$ is a weak ideal-Armendariz ring.
\end{pro}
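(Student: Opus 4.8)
The plan is to unwind the definition: by Definition 2.1 a ring is weak ideal-Armendariz precisely when it is both Armendariz and weakly semicommutative, so I must verify these two properties for $R$ separately, given that both hold for $R/I$ and that $I$ is reduced (has no nonzero nilpotent elements). Throughout I write $\bar{\cdot}$ for images in $R/I$ and use the identification $(R/I)[x]\cong R[x]/I[x]$. The common first move in both halves is to push an equation from $R$ down to $R/I$, apply the hypothesis on $R/I$, and then pull the resulting membership in $I$ back up using the reduced structure of $I$.

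The engine for the pull-back is a short toolkit about a reduced ideal $I$: for $x\in I$ and $y\in R$, if $xy=0$ then $(yx)^2=y(xy)x=0$ and $yx\in I$, so $yx=0$; iterating, $xy=0$ forces $xRy=0=yRx$. More useful is the version where the zero product comes from $R$ itself: if $st=0$ in $R$ and $u\in I$, then $(tus)^2=tu(st)us=0$ and $tus\in I$, whence $tus=0$. I would record these as a preliminary lemma, since they are exactly what converts ``lands in $I$'' into ``is zero''.

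For the Armendariz part, take $f(x)g(x)=0$ with $f=\sum a_i x^i$, $g=\sum b_j x^j$. Reducing modulo $I$ gives $\bar f\bar g=0$ in $(R/I)[x]$, and since $R/I$ is Armendariz we get $a_ib_j\in I$ for all $i,j$; using also that $R/I$ is weakly semicommutative, $\bar a_i (R/I)\bar b_j\subseteq N(R/I)$, i.e.\ $a_i s b_j$ is nilpotent modulo $I$ for every $s$. The goal is then to upgrade $a_ib_j\in I$ to $a_ib_j=0$, and since $a_ib_j\in I$ with $I$ reduced it suffices to show each $a_ib_j$ is nilpotent. The leading product $a_mb_n=0$ is trivial, and from it the toolkit yields $b_n u a_m=0$ for every $u\in I$; the plan is to run a descending induction on $i+j$ on the coefficient equations $\sum_{i+j=k}a_ib_j=0$, multiplying by suitable leading coefficients so that the cross-terms fall into $I$ as square-zero elements and vanish. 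I expect this boosting step to be the main obstacle: the naive induction stalls because it needs reversibility of the bare leading product $a_mb_n=0$ (i.e.\ $b_na_m=0$), which is not available since neither $a_m$ nor $b_n$ need lie in $I$; the substantive work is to organise the induction (or to invoke the standard Armendariz-extension lemma along a reduced ideal) so that every auxiliary element one must annihilate genuinely sits in the reduced ideal $I$.

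The weakly semicommutative part is cleaner. Let $ab=0$ in $R$ and fix $r\in R$; I must show $arb\in N(R)$. Passing to $R/I$, weak semicommutativity of $R/I$ gives $\overline{arb}\in N(R/I)$, so $u:=(arb)^k\in I$ for some $k\ge 1$. Writing $u=aXb$ with $X\in R$ (peeling the leading $a$ and trailing $b$), the key computation is $(bua)^2=bu(ab)ua=0$, and since $bua\in I$ with $I$ reduced this forces $bua=0$, i.e.\ $(ba)X(ba)=0$. Then $u^3=aX(ba)X(ba)Xb=aX\big((ba)X(ba)\big)Xb=0$, so $u$ is nilpotent and hence $arb\in N(R)$. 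Therefore $R$ is both Armendariz and weakly semicommutative, i.e.\ weak ideal-Armendariz, once the Armendariz boosting is secured.
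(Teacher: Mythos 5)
Your weakly semicommutative half is complete and correct, and is a mild variant of the paper's own argument: the paper derives $bIa=0$ from $ab=0$ and shows $(arb)^nI(arb)^n=0$, hence $((arb)^n)^2=0$, whereas you kill $b\,(arb)^k\,a$ directly and conclude $((arb)^k)^3=0$; the two routes are interchangeable. The genuine gap is the Armendariz half, which you explicitly leave conditional (``once the Armendariz boosting is secured''). It is worth knowing that the paper does not prove this half either: it disposes of it in one line by citing Theorem 11 of \cite{C}, which is exactly the ``Armendariz-extension lemma along a reduced ideal'' you name as a fallback. So the minimal repair is simply to invoke that result, as the paper does.

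That said, the obstacle you flag is not real, and your own toolkit already closes the gap. You never need reversibility of $a_mb_n=0$ inside $R$; you only need the $I$-relative statement that $st=0$ forces $tus=0$ for all $u\in I$, which is your third toolkit item. Run the induction upward on $k=i+j$: all $a_ib_j$ lie in $I$ because $R/I$ is Armendariz; assume $a_ib_j=0$ whenever $i+j<k$, and multiply $\sum_{i+j=k}a_ib_j=0$ on the right by $ua_0b_k$ with $u\in I$ arbitrary. For $i\geq 1$ the term $a_ib_{k-i}ua_0b_k$ vanishes, since $a_0b_{k-i}=0$ (as $0+(k-i)<k$) gives $b_{k-i}ua_0=0$; hence $a_0b_k\,I\,a_0b_k=0$, so $(a_0b_k)^3=0$ and, $a_0b_k$ lying in the reduced ideal $I$, $a_0b_k=0$. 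Delete this term from the identity and repeat with $a_1b_{k-1}$, and so on. This completes the Armendariz half without any citation, and in fact reproves the quoted theorem of \cite{C}.
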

\begin{proof} Let $ab = 0$ for $a, b \in R$. Since $I$ is reduced ideal of $R$, so $bIa = 0$. Again, $ab = 0$ implies $ab \in I$. Since $R/I$ is a weakly semicommutative, so $(aRb+I)\subseteq N(R/I)$. Therefore, $(arb)^{n} \in I$ for any $r\in R$. Moreover, $(arb)^{n}I(arb)^{n}I \subseteq I$ and this implies $(arb)^{n-1}(arb)I(arb)(arb)^{n-1}I = 0$. This shows that $((arb)^{n}I)^{2} = 0$. Since $I$ is reduced, so $(arb)^{n}I = 0$ and $(arb)^{n}\in (arb)^{n}I$ implies $(arb)^{n} = 0$. Therefore, $(arb) \in N(R)$ for any $r \in R$. Hence, $R$ is a weakly semicommutative ring. Also by Theorem $11$ of \cite{C}, $R$ is an Armendariz ring. Thus, $R$ is a weak ideal-Armendariz ring.
\end{proof}
\begin{pro} Finite direct product of weak ideal-Armendariz rings is weak ideal-Armendariz.
\end{pro}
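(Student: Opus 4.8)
The plan is to reduce the claim about a finite direct product to the two-factor case and then verify each of the two defining properties of a weak ideal-Armendariz ring separately, since by Definition~2.1 a ring is weak ideal-Armendariz precisely when it is both Armendariz and weakly semicommutative. By an obvious induction on the number of factors, it suffices to show that if $R_1$ and $R_2$ are weak ideal-Armendariz, then $R = R_1 \times R_2$ is weak ideal-Armendariz.

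First I would handle the Armendariz property. The key structural fact is that coefficients and products in $R[x] \cong R_1[x] \times R_2[x]$ are computed componentwise, so a polynomial $f(x) \in R[x]$ corresponds to a pair $(f_1(x), f_2(x))$ with $f_i(x) \in R_i[x]$, and $C_{f(x)}$ projects onto $C_{f_i(x)}$ in each coordinate. Thus if $f(x)g(x) = 0$ in $R[x]$, then $f_i(x)g_i(x) = 0$ in $R_i[x]$ for $i = 1, 2$; applying the Armendariz property of each $R_i$ gives that every coordinate product of a coefficient of $f$ with a coefficient of $g$ vanishes, and reassembling componentwise yields $ab = 0$ for all $a \in C_{f(x)}$, $b \in C_{g(x)}$. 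Hence $R$ is Armendariz.

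Next I would verify weak semicommutativity. Suppose $ab = 0$ in $R$ with $a = (a_1, a_2)$, $b = (b_1, b_2)$; then $a_i b_i = 0$ in each $R_i$. Given $r = (r_1, r_2) \in R$, weak semicommutativity of $R_i$ gives $a_i r_i b_i \in N(R_i)$, so there is $k_i$ with $(a_i r_i b_i)^{k_i} = 0$. Taking $k = \max\{k_1, k_2\}$ yields $(arb)^{k} = ((a_1 r_1 b_1)^{k}, (a_2 r_2 b_2)^{k}) = 0$, so $arb \in N(R)$. This uses the elementary fact that $N(R_1 \times R_2) = N(R_1) \times N(R_2)$, which follows immediately from componentwise multiplication. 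Therefore $R$ is weakly semicommutative, and combining with the previous paragraph, $R$ is weak ideal-Armendariz.

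I do not anticipate a genuine obstacle here, as both properties pass through direct products transparently once one records that polynomials, coefficient sets, and nilpotents all decompose componentwise; the only point requiring a moment's care is the nonuniform nilpotency index in the weak semicommutativity argument, which is resolved simply by taking the maximum of the two exponents. The induction step then completes the proof for an arbitrary finite product.
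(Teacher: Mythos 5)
Your proof is correct and follows essentially the same route as the paper: verify the Armendariz property by decomposing polynomials componentwise over the factors, and verify weak semicommutativity componentwise using $N(\prod_i R_i)=\prod_i N(R_i)$ together with a common nilpotency exponent. The only cosmetic difference is that you reduce to two factors and induct, whereas the paper argues directly for $n$ factors; your handling of the coordinatewise condition ($a_ib_i=0$) is in fact cleaner than the paper's statement of it.
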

\begin{proof} Let $R_{1}, R_{2}, \ldots, R_{n}$ be Armendariz rings and $R = \prod _{i=0}^{n}R_{i}$. Let $f(x) = a_{0}+a_{1}x+a_{2}x^{2}+\cdots+a_{n}x^{n}$ and $g(x) = b_{0}+b_{1}x+b_{2}x^{2}+\cdots+b_{m}x^{m} \in R[x]$ such that $f(x)g(x) = 0$, where $a_{i} = (a_{i1}, a_{i2}, \ldots, a_{in})$ and $b_{i} = (b_{i1}, b_{i2}, \ldots, b_{im})$. Define\\
\begin{eqnarray*}
f_{p}(x) = a_{0p}+a_{1p}x+\cdots+a_{np}x^{n},~~ g_{p}(x) = b_{0p}+b_{1p}x+\cdots+b_{mp}x^{m}.
\end{eqnarray*}
By $f(x)g(x) = 0$, we have $a_{0}b_{0} = 0$, $a_{0}b_{1}+a_{1}b_{0} = 0$, $a_{0}b_{2}+a_{1}b_{1}+a_{2}b_{0} = 0,\ldots, a_{n}b_{m} = 0$ this implies

\begin{eqnarray*}
a_{01}b_{01} = 0, a_{02}b_{02} = 0, \ldots a_{0n}b_{0n} = 0\\
a_{01}b_{11}+a_{11}b_{01} = 0, \ldots, a_{0n}b_{1n}+a_{1n}b_{0n} = 0\\
a_{n1}b_{m1} = 0, a_{n2}b_{m2} =0, \ldots, a_{nn}b_{nm} = 0.
\end{eqnarray*}
Therefore, $f_{p}(x)g_{p}(x) = 0$ in $R_{p}[x]$, for $1\leq p \leq n$. Since each $R_{p}$ is an Armendariz ring, $a_{ip}b_{jp} = 0$ for each $i, j$ and $1\leq p \leq n$ and hence $a_{i}b_{j} = 0$ for each $0 \leq i \leq n$ and $0\leq j \leq m$. Thus, $R$ is a weak ideal-Armendariz ring.\\
It is known that $N(\prod_{i=0}^{n}R_{i}) = \prod_{i=0}^{n}N(R_{i})$. \\
Let $(a_{1}, a_{2},\ldots,a_{n}), (b_{1}, b_{2},\ldots,b_{n})\in N(\prod_{i=0}^{n}R_{i})$ such that  $(a_{1}, a_{2},\ldots,a_{n})(b_{1}, b_{2},\ldots,b_{n}) = 0$. Then $a_{i}b_{j} = 0$ for each $1 \leq i, j \leq n$. Since $R_{i}$ is a weakly semicommutative ring, therefore $a_{i}R_{i}b_{j}\subseteq N(R_{i})$ for each $i, j$. So, there exists a maximal positive integer $k$ among all positive integers (all nilpotency) such that $a_{i}R_{i}b_{i} \subseteq N(R_{i})$ for each $i = 1, 2, \ldots, n$. Therefore, $(a_{1}, a_{2},\ldots,a_{n})\prod_{i=0}^{n}R_{i}(b_{1}, b_{2},\ldots,b_{n}) \in \prod_{i=0}^{n}N(R_{i}) = N(\prod_{i=0}^{n}R_{i})$.\\
\end{proof}

Let $R$ be a ring and let $S^{-1}R = \{u^{-1}a ~|~ u \in S, a \in R\}$ with $S$ a multiplicative closed subset of the ring $R$ consisting of central regular elements. Then $S^{-1}R$ is a ring.

\begin{pro}\label{pro8} Let $S$ be multiplicative closed subset of a ring $R$ consisting of central regular elements. Then $R$ is a weak ideal-Armendariz if and only if $S^{-1}R$ is a weak ideal-Armendariz.
\end{pro}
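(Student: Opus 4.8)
The plan is to decouple the two defining conditions of a weak ideal-Armendariz ring — being Armendariz and being weakly semicommutative — and verify each one transfers across the localization, using throughout that the canonical map $R \to S^{-1}R$, $a \mapsto 1^{-1}a$, is an injective ring homomorphism. Injectivity is exactly where the regularity of the elements of $S$ is used, and it realizes $R$ as a subring of $S^{-1}R$. Consequently the ``if'' direction is immediate: if $S^{-1}R$ is weak ideal-Armendariz, then so is its subring $R$ by Remark 2.1$(2)$. All the real work lies in the ``only if'' direction.

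The central technical device for the forward direction is clearing denominators, and this is where centrality of $S$ is essential. Since each $u \in S$ is central, the element $u^{-1}$ commutes with everything in $S^{-1}R$; and given any polynomial $F(x) \in S^{-1}R[x]$, the finitely many denominators appearing in its coefficients have a common central regular multiple $u \in S$, so I may write $F(x) = u^{-1}f(x)$ with $f(x) \in R[x]$. I would also record the elementary fact that if $w \in S$ is regular and $w^{-1}h = 0$ in $S^{-1}R$, then $h = 0$; this is what lets me pass equalities back from $S^{-1}R$ to $R$.

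For the Armendariz condition, suppose $F(x)G(x) = 0$ with $F = u^{-1}f$ and $G = v^{-1}g$, where $f,g \in R[x]$ and $u,v \in S$. Centrality gives $FG = (uv)^{-1}fg$, and since $uv$ is regular the vanishing of this fraction forces $f(x)g(x) = 0$ in $R[x]$. Armendariz-ness of $R$ then yields $b_i c_j = 0$ for every coefficient $b_i$ of $f$ and $c_j$ of $g$. As the coefficients of $F$ and $G$ are $u^{-1}b_i$ and $v^{-1}c_j$, their products are $(uv)^{-1}b_ic_j = 0$, so $S^{-1}R$ is Armendariz. For the weakly semicommutative condition, take $\alpha\beta = 0$ with $\alpha = u^{-1}a$ and $\beta = v^{-1}b$; clearing denominators and invoking regularity gives $ab = 0$ in $R$. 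For arbitrary $\gamma = w^{-1}c \in S^{-1}R$, centrality gives $\alpha\gamma\beta = (uwv)^{-1}acb$. Since $R$ is weakly semicommutative, $acb \in N(R)$, say $(acb)^k = 0$, whence $(\alpha\gamma\beta)^k = (uwv)^{-k}(acb)^k = 0$ and $\alpha\gamma\beta \in N(S^{-1}R)$. Thus $S^{-1}R$ is weakly semicommutative, and combining the two parts finishes the forward direction.

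I expect no deep obstacle here; the only genuinely delicate point is the localization bookkeeping — confirming that a fraction with regular denominator vanishes only when its numerator does, and that centrality of $S$ permits pulling every denominator to the front and combining them into a single central regular element. Once those two routine facts are in hand, both halves of the argument are purely formal manipulations.
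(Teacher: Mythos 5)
Your proof is correct and follows the same route as the paper: both split the weak ideal-Armendariz property into its Armendariz and weakly semicommutative components, transfer each to $S^{-1}R$ separately, and handle the converse by noting $R$ is a subring of $S^{-1}R$. The only difference is that the paper simply cites the known equivalence for the Armendariz part and Proposition (3.1) of the Liang--Wang--Zhongkui reference for the weakly semicommutative part, whereas you supply the denominator-clearing arguments explicitly; your version is therefore more self-contained but not a different method.
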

\begin{proof} Since $R$ is an Armendariz ring if and only if $S^{-1}R$ is an Armendariz ring. Also, by Proposition $(3.1)$ of \cite{L}, $S^{-1}R$ is a weakly semicommutative ring. Hence, $S^{-1}R$ is a weak ideal-Armendariz ring.
It is known that subring of a weak ideal-Armendariz ring is weak ideal-Armendariz, therefore converse is true.
\end{proof}
\begin{cor} For any ring $R$, following conditions are equivalent:
\begin{itemize}
\item[$(1)$]$R[x]$ is weak ideal-Armendariz ring.
\item[$(2)$] $R[x, x^{-1}]$ is weak ideal-Armendariz ring.
\end{itemize}
\begin{proof} $(1)\Leftrightarrow (2)$ Clearly, $S = \{1, x, x^{2},\ldots\}$ is a multiplicative set in $R[x]$ and $R[x, x^{-1}] = S^{-1}R[x]$. Rest follows from Proposition \ref{pro8}.
\end{proof}
\end{cor}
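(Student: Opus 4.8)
The plan is to realize the Laurent polynomial ring $R[x,x^{-1}]$ as a localization of $R[x]$ and then invoke Proposition \ref{pro8} directly, rather than re-deriving the Armendariz and weakly semicommutative conditions from scratch.

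First I would set $S = \{1, x, x^{2}, \ldots\} \subseteq R[x]$ and verify that this is a multiplicatively closed subset consisting of central regular elements of $R[x]$. It is closed under multiplication since $x^{i}x^{j} = x^{i+j}$; each $x^{k}$ is central because the indeterminate $x$ commutes with every coefficient from $R$ by the definition of the polynomial ring (this holds even when $R$ is noncommutative); and each $x^{k}$ is regular, i.e.\ a non-zero-divisor, since multiplying a nonzero polynomial by $x^{k}$ merely shifts its coefficients and so cannot yield the zero polynomial. These are exactly the hypotheses required to form the localization in the sense used just before Proposition \ref{pro8}.

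Next I would identify $R[x, x^{-1}]$ with the localization $S^{-1}R[x]$. Every Laurent polynomial can be written in the form $x^{-k}f(x)$ for some $k \ge 0$ and $f(x) \in R[x]$, which is precisely an element $u^{-1}a$ with $u = x^{k} \in S$ and $a = f(x) \in R[x]$; conversely every such $u^{-1}a$ is a Laurent polynomial. This yields the identification $R[x, x^{-1}] = S^{-1}R[x]$. With this in hand, the corollary is immediate: Proposition \ref{pro8}, applied to the ring $R[x]$ together with the central multiplicative set $S$, asserts that $R[x]$ is weak ideal-Armendariz if and only if $S^{-1}R[x]$ is weak ideal-Armendariz, and since $S^{-1}R[x] = R[x, x^{-1}]$ this is exactly the equivalence $(1) \Leftrightarrow (2)$.

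I expect no genuine obstacle here; the only point requiring (routine) care is confirming that the powers of $x$ are central and regular in $R[x]$, so that Proposition \ref{pro8} legitimately applies. Once that verification is recorded, the corollary follows formally from the previously established localization result, with no further computation needed.
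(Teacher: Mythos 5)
Your proposal is correct and follows exactly the same route as the paper: take $S=\{1,x,x^{2},\ldots\}$ as a multiplicative set of central regular elements in $R[x]$, identify $R[x,x^{-1}]$ with $S^{-1}R[x]$, and apply Proposition~\ref{pro8}. The only difference is that you spell out the (routine) verification that powers of $x$ are central and regular, which the paper leaves implicit.
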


For a semiprime ring $R$, note that $R$ is reduced if and only if $N_{*}(R) = N(R)$ if and only if $R$ is semicommutative ring if and only if $R$ is weakly semicommutative ring. Since, for a semiprime ring $R$, $\{0\} = N_{*}(R) = N(R)$.
A ring $R$ is said to be a right Ore ring if for $a, b \in R$ with $b$ is a regular, there exist $a_{1}, b_{1} \in R$ with $b_{1}$ is regular such that $ab_{1} = ba_{1}$. It is known that a ring $R$ is a right Ore ring if and only if the classical right quotient ring $Q(R)$ of $R$ exists.
\begin{pro} Let $R$ be a semiprime right Goldie ring. Then following statements are equivalent:
\begin{itemize}
\item[$(1)$]  $R$ is an ideal-Armendariz ring.
\item[$(2)$]  $R$ is an Armendariz ring.
\item[$(3)$]  $R$ is reduced ring.
\item[$(4)$]  $R$ is semicommutative ring.
\item[$(5)$]  $R$ is weakly semicommutative ring.
\item[$(6)$]  $Q(R)$ is an ideal Armendariz.
\item[$(7)$]  $Q(R)$ is reduced ring.
\item[$(8)$]  $Q(R)$ is semicommutative ring.
\item[$(9)$]  $Q(R)$ is weakly semicommutative ring.
\item[$(10)$]  $R$ is weak ideal-Armendariz ring.
\item[$(11)$]  $Q(R)$ is weak ideal-Armendariz ring.
\end{itemize}
\end{pro}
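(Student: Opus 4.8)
The plan is to make condition $(3)$, that $R$ is reduced, the hub through which every other condition passes, exploiting that both $R$ and $Q(R)$ are semiprime: $R$ is semiprime by hypothesis, and $Q(R)$ is semisimple Artinian by Goldie's theorem, hence semiprime. The displayed observation preceding this proposition then immediately collapses three conditions on each side. On $R$ it gives $(3)\Leftrightarrow(4)\Leftrightarrow(5)$, and on the semiprime ring $Q(R)$ it gives $(7)\Leftrightarrow(8)\Leftrightarrow(9)$. The bridge $(3)\Leftrightarrow(7)$ is the standard Goldie fact that a semiprime right Goldie ring is reduced if and only if its classical right quotient ring is reduced; one direction is free, since $R$ is a subring of $Q(R)$ and subrings of reduced rings are reduced.

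Next I would dispatch all the routine implications against this hub. For the forward direction, a reduced ring is Armendariz by Armendariz's original lemma and is also IFP (semicommutative), hence ideal-Armendariz, which yields $(3)\Rightarrow(1),(2)$, and by Remark 2.1$(1)$ also $(3)\Rightarrow(10)$; applying the same reasoning to the reduced ring $Q(R)$ gives $(7)\Rightarrow(6),(11)$. For the backward direction I would use only the definitions together with the semiprime collapse: an ideal-Armendariz ring is both Armendariz and IFP, so $(1)\Rightarrow(2)$ and $(1)\Rightarrow(4)\Rightarrow(3)$; a weak ideal-Armendariz ring is by definition weakly semicommutative, so $(10)\Rightarrow(5)\Rightarrow(3)$; and the same two remarks applied to $Q(R)$ give $(6)\Rightarrow(8)\Rightarrow(7)$ and $(11)\Rightarrow(9)\Rightarrow(7)$. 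Note that, because no ``$Q(R)$ is Armendariz'' condition appears in the list, every $Q(R)$-side condition feeds back to reduced through IFP or weak semicommutativity, which avoids the delicate step below.

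This leaves the single substantive implication $(2)\Rightarrow(3)$, that an Armendariz semiprime right Goldie ring is reduced. Writing $Q(R)=\prod_{i}M_{n_{i}}(D_{i})$ by Goldie's theorem, the plan is first to transfer the Armendariz property from $R$ up to $Q(R)$; then, since the Armendariz property passes to each direct factor and $M_{n}(D)$ with $n\geq 2$ is not Armendariz by Remark 2.1$(4)$, every $n_{i}$ must equal $1$, so $Q(R)$ is a finite direct product of division rings and is therefore reduced, whence $R\subseteq Q(R)$ is reduced. The main obstacle is exactly this transfer step: because $R$ is only a right order and its regular elements need not be central, Proposition \ref{pro8} does not apply, and one must clear denominators directly. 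Given $F(x)G(x)=0$ in $Q(R)[x]$, I would use the right Ore condition to place all coefficients over a common right denominator, reduce to an honest identity $f(x)g(x)=0$ in $R[x]$, apply the Armendariz property of $R$, and lift the resulting coefficient relations back to $Q(R)$. This argument simultaneously produces $(7)$, completing the bridge.

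With $(2)\Rightarrow(3)$ in hand, the cycle closes: every condition implies reduced, and reduced implies every condition, on both $R$ and $Q(R)$, so all eleven are equivalent.
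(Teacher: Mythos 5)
Your proof is correct, but it is organized quite differently from the paper's. The paper proves the equivalences as a single cycle $(1)\Rightarrow(2)\Rightarrow\cdots\Rightarrow(9)\Rightarrow(10)\Rightarrow(11)\Rightarrow(1)$, declares the first nine implications obvious, and outsources the two substantive transfers to the cited literature: Corollary 13 of \cite{C} for ``Armendariz $\Leftrightarrow$ reduced'' over a semiprime right Goldie ring, and Theorem 12 of \cite{C} for ``$R$ Armendariz $\Leftrightarrow$ $Q(R)$ Armendariz.'' You instead use condition $(3)$ as a hub and, crucially, you actually prove the one hard implication $(2)\Rightarrow(3)$ rather than citing it; your route (transfer the Armendariz property up to $Q(R)$ by clearing denominators, then invoke the Wedderburn decomposition together with the fact that $M_{n}(D)$ is never Armendariz for $n\geq 2$) is exactly the argument behind the cited corollary, so your version is self-contained where the paper is not. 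Both proofs lean equally on the unproved observation displayed just before the proposition (for a semiprime ring, weakly semicommutative $\Leftrightarrow$ reduced), so you are no worse off there; note that for $Q(R)$ this observation can be checked directly from the Wedderburn decomposition, whereas for $R$ itself it is the one ingredient that neither you nor the paper justifies. One detail to supply in your denominator-clearing step: after writing $A_{i}=a_{i}u^{-1}$ and $B_{j}=b_{j}v^{-1}$ with common regular denominators $u,v$, you need a second application of the right Ore condition to rewrite $u^{-1}b_{j}=c_{j}d^{-1}$ with a single regular $d$, so that $A_{i}B_{j}=a_{i}c_{j}(vd)^{-1}$; then $F(x)G(x)=0$ becomes $f(x)g(x)=0$ for $f=\sum a_{i}x^{i}$, $g=\sum c_{j}x^{j}$ in $R[x]$, and the conclusion $a_{i}c_{j}=0$ lifts back to $A_{i}B_{j}=0$. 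With that detail written out, the argument is complete.
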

\begin{proof} Implication from (1) to (9) are obvious.\\
$(9)\Rightarrow(10)$: $Q(R)$ is weakly semicommutative so is $R$ (being subring). Moreover, $Q(R)$ is semicommutative, therefore,  $R$ is an Armendariz by Corollary 13 of [\cite{C}].\\ $(10)\Rightarrow(11)$: Since $R$ is a semiprime right Goldie ring.  Therefore, Armedariz ring and weakly semicommutative both are equivalent and from Theorem 12 of \cite{C}, $R$ is an Armendariz if and only if $Q(R)$ is an Armendariz ring.\\
$(11)\Rightarrow(1)$: It is obvious.
\end{proof}

For an algebra $R$ over commutative ring $S$, the Dorroh extension of $R$ by $S$ is an abelian group $D = R\oplus S$ with multiplication defined by $(r_1, s_1)(r_2, s_2) = (r_1r_2+s_1r_2+s_2r_1, s_1s_2)$, where $r_1, r_2 \in R$ and $s_1, s_2 \in S$.

\begin{thm} Let $R$ be an algebra over a commutative domain $S$ and $D$ be a Dorroh extension of $R$ by $S$. Then $R$ is a weak ideal Armendariz if and only if $D$ is a weak ideal Armendariz ring.
\end{thm}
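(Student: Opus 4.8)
The plan is to use the definition weak ideal-Armendariz $=$ Armendariz $+$ weakly semicommutative and to transfer each property between $R$ and $D$ separately. The reverse direction is free: $r\mapsto(r,0)$ is a ring isomorphism of $R$ onto the subring $R\oplus 0$ of $D$, so if $D$ is weak ideal-Armendariz then so is $R$ by Remark 2.1$(2)$. For the forward direction I assume $R$ is Armendariz and weakly semicommutative and record the three facts on which everything rests. Writing an element of $D[x]$ as $(f,\sigma)$ with $f\in R[x]$ and $\sigma\in S[x]$, the multiplication reads $(f,\sigma)(g,\tau)=(fg+\sigma g+\tau f,\ \sigma\tau)$; because the $S$-component of $(r,u)^{n}$ is $u^{n}$ and $S$ is a domain, $N(D)=N(R)\oplus 0$; and, crucially, since $R$ has an identity the $S$-action satisfies $s\cdot b=(s1_{R})b$, so every scalar may be absorbed into an honest element of $R$.

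For the Armendariz transfer, suppose $(f,\sigma)(g,\tau)=0$, so $\sigma\tau=0$ and $fg+\sigma g+\tau f=0$. As $S$ is a domain, $S[x]$ is a domain and $\sigma=0$ or $\tau=0$; take $\tau=0$ (the other case is symmetric). Putting $\hat\sigma=\sum_{i}(s_{i}1_{R})x^{i}\in R[x]$, the scalar term becomes $\sigma g=\hat\sigma g$, so the surviving equation is the honest polynomial identity $(f+\hat\sigma)g=0$ in $R[x]$. The Armendariz property of $R$ then yields $(a_{i}+s_{i}1_{R})b_{j}=0$, i.e. $a_{i}b_{j}+s_{i}b_{j}=0$, which is exactly $(a_{i},s_{i})(b_{j},0)=0$ for all coefficients; hence $D$ is Armendariz.

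For the weakly semicommutative transfer, suppose $(a,s)(b,t)=0$, so $st=0$ and $ab+sb+ta=0$. Since $S$ is a domain the case $s\neq 0\neq t$ cannot occur, leaving $t=0$ or $s=0$. For any $(c,u)$ the $S$-component of $(a,s)(c,u)(b,t)$ equals $sut=0$, so the triple product lies in $R\oplus 0$, and by $N(D)=N(R)\oplus 0$ it suffices to show its $R$-component is nilpotent; the $u$-dependent terms cancel because $S$ is commutative, so this $R$-component is $acb+scb$ when $t=0$ and $acb+tac$ when $s=0$. In the first case $ab+sb=0$ reads $(a+s1_{R})b=0$, and weak semicommutativity of $R$ gives $(a+s1_{R})cb=acb+scb\in N(R)$; in the second case $ab+ta=0$ reads $a(b+t1_{R})=0$, giving $ac(b+t1_{R})=acb+tac\in N(R)$. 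Either way the triple product lies in $N(R)\oplus 0=N(D)$, so $D$ is weakly semicommutative, completing the forward direction.

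The step I expect to be the crux is the handling of the scalar cross-terms. The naive mixed relations $ab+sb=0$ and $ab+ta=0$ are not zero-products inside $R$, so weak semicommutativity of $R$ cannot be applied to them directly; the two devices that make the argument go through are the identity of $R$, used to rewrite $s\cdot(-)$ as multiplication by $s1_{R}$ and thereby manufacture the genuine annihilations $(a+s1_{R})b=0$ and $a(b+t1_{R})=0$, and the domain hypothesis on $S$, which is exactly what eliminates the otherwise intractable case in which both scalar components are nonzero and which simultaneously gives $N(D)=N(R)\oplus 0$. Getting these two ingredients to cooperate, rather than any single long computation, is the real content of the proof.
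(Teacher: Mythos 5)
Your proof is correct and follows essentially the same route as the paper: the weakly semicommutative transfer is handled by the identical case split (one of the two scalar components must vanish because $S$ is a domain) together with the same device of absorbing the scalar into $R$ via $1_{R}$ so that weak semicommutativity of $R$ can be applied to a genuine annihilation, and your computation of the triple product and the identification $N(D)=N(R)\oplus 0$ match the paper's. The only divergence is that you prove the Armendariz transfer directly, by viewing $D[x]$ as the Dorroh extension of $R[x]$ by $S[x]$, whereas the paper simply cites Theorem~3.4 of Kwak et al.; this makes your write-up more self-contained but is not a different method.
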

\begin{proof} Let $D$ be a weak ideal-Armendariz ring. Since $D$ is an extension of $R$, therefore $R$ is a weak ideal-Armendariz ring.\\
To prove the converse, let $R$ be a weak ideal-Armendariz ring. Take $(r_{1}, s_{1})(r_{2}, s_{2}) = 0$. Then $r_{1}r_{2}+s_{1}r_{2}+s_{2}r_{1} = 0$ and $s_{1}s_{2} = 0$.\\

\textbf{Case (1)} If $s_{1} = 0$, then $r_{1}r_{2}+s_{2}r_{1} = 0$. This implies $r_{1}(r_{2}+s_{2}) = 0$, and hence by definition of weakly semicommutative ring $r_{1}R(r_{2}+s_{2})\subseteq N(R)$. Therefore, $(r_{1}, 0)(a, b)(r_{2}, s_{2}) = (r_{1}a+br_{1}, 0)(r_{2}, s_{2}) = (r_{1}(a+b), 0)(r_{2}, s_{2}) = (r_{1}(a+b)r_{2}+r_{1}(a+b)s_{2}, 0) = (r_{1}(a+b)(r_{2}+s_{2}), 0) \subseteq N(R)$. Thus, $(r_{1}, s_{1})D(r_{2}, s_{2}) \subseteq N(D)$.\\

\textbf{Case (2)} If $s_{2} = 0$, then $r_{1}r_{2}+s_{1}r_{2} = 0$, so $(r_{1}+s_{1})r_{2} = 0$. Therefore, $(r_{1}+s_{1})Rr_{2} \subseteq N(R)$. Also, $(r_{1}, s_{1})(a, b)(r_{2}, 0) = (r_{1}a+s_{1}a+r_{1}b, s_{1}b)(r_{2}, 0) = (r_{1}ar_{2}+s_{1}ar_{2}+r_{1}br_{2}+s_{1}br_{2}, 0) = (r_{1}(a+b)r_{2}+s_{1}(a+b)r_{2}, 0) = ((r_{1}+s_{1})(a+b)r_{2}, 0) \subseteq N(R)$. This implies $(r_{1}, s_{1})(a, b)(r_{2}, s_{2}) \subseteq N(D)$. Hence, $D$ is a weakly semicommutative ring. By Theorem (3.4) of \cite{TT}, $D$ is an Armendariz ring. Thus, $D$ is a weak ideal-Armendariz ring.
\end{proof}

A ring $R$ is abelian if every idempotent element is central. In 1998, Anderson and Camillo proved that Armendariz rings are abelian\cite{D}, but weakly semicommutative ring is not abelian, therefore, weak ideal-Armendariz ring is not abelian. In this regards we have the following:

\begin{ex}\label{ex3} Let $D$ be a domain and $R_{1}$, $R_{2}$ be two rings such that $R_{1} = \left(
                                                                                      \begin{array}{cc}
                                                                                        D & D \\
                                                                                        0 & 0 \\
                                                                                      \end{array}
                                                                                    \right)$, $R_2 = \left(
                                                                                                       \begin{array}{cc}
                                                                                                         0 & D \\
                                                                                                         0 & D \\
                                                                                                       \end{array}
                                                                                                     \right)$. Let $0 \neq A = \left(
                                                                                                                      \begin{array}{cc}
                                                                                                                        a_1 & b_1 \\
                                                                                                                        0 & 0 \\
                                                                                                                      \end{array}
                                                                                                                    \right)$, $0 \neq B = \left(
                                                                                                                                     \begin{array}{cc}
                                                                                                                                       a_2 & b_2 \\
                                                                                                                                       0 & 0 \\
                                                                                                                                     \end{array}
                                                                                                                                   \right) \in R_1$ such that $AB = 0$. Then $a_1 a_2 =0$ and $a_1 b_2= 0$. It is only possible when $a_1 = 0$. This implies $ACB \in N(R_1)$ for each $C \in R_1$. Hence, $R_{1}$ is a weakly semicommutative ring. Since, $e_{11}e_{12} = e_{12}$ and $e_{12}e_{11} = 0$, whereas $e_{11}^{2} = e_{11}$, therefore $R_{1}$ is a non-abelian ring.\\
Similarly, we can prove that $\left(
                                \begin{array}{cc}
                                  0 & D \\
                                  0 & D \\
                                \end{array}
                              \right)$ is a non-abelian weakly semicommutative ring.

Also, from Example (2.14) of \cite{TT}, $R_1$ and $R_2$ both are Armendariz rings. Thus, $R_1$ and $R_2$ both are non-abelian weak ideal-Armendariz rings.
\end{ex}
%%%%%%%%%%%%%%%%%%%%%%%%%%%%%%%%%%%%%%%%%%%%%%%%%%%%%%%%%%%%%%%%%%%%%%%%%%%%%%%%%%%%%%%%%%%%%%%%%%%%%%%%%%%%%%%%%%%%%%%%%%%%%%%%%%%%%%%%%%%%%%%%%%%%

\section{STRONGLY NIL-IFP}

In this section we have introduced the concept of strongly nil-IFP which is the generalisation of weak ideal-Armendariz ring. Towards this, we have the following definition:\\

\emph{A ring $R$ is said to be strongly nil-IFP if for any $f(x), g(x) \in R[x]$ such that $f(x)g(x) = 0$, then $arb \in N(R)$, for all $a \in C_{f(x)}, b \in C_{g(x)}$ and each $r \in R$.} \\

By definition, it is clear that weak ideal-Armendariz ring is strongly nil-IFP but converse is not true.\\
\begin{ex} Suppose $R = \mathbb{Z}_{7}[x, y]/<x^{3}, x^{2}y^{2}, y^{3}>$, where $\mathbb{Z}_{7}$ is a Galois field of order $7$, $\mathbb{Z}_{7}[x, y]$ is the polynomial ring with two indeterminates $x$ and $y$ over $\mathbb{Z}_{7}$ and $(x^{3}, x^{2}y^{2}, y^{3})$ is the ideal of $\mathbb{Z}_{7}[x, y]$ generated by $x^{3}, x^{2}y^{2}, y^{3}$. Let $R[t]$ be the polynomial ring over $R$ with an indeterminate $t$.
 We consider $f(t) = {\bar x} + {\bar y}t$, $g(t) = 3{\bar x^{2}} + 4{\bar x \bar y}t + 3{\bar y^{2}}t^{2} \in R[t]$. Then $f(t)g(t) = 0$ but ${\bar x}.4{ \bar x }{\bar y} \neq {\bar 0}$. Therefore, $R$ is not an Armendariz ring, hence it is not a weak ideal-Armendariz ring. But $R$ is a strongly nil-IFP, because for any $f(t), ~g(t) \in R[t]$ if $f(t)g(t) = 0$, then $arb \in N(R)$ for each $r \in R$, $a \in C_{f(t)}$ and $b \in C_{g(t)}$.
\end{ex}
\begin{ex}(\cite{C}, Example (2))\label{ex2} Let $\mathbb{Z}_{2}$ be the field of integers modulo 2 and $A = \mathbb{Z}_{2}[a_{0}, a_{1}, a_{2}, b_{0}, b_{1}, b_{2}, c]$ be the free algebra of polynomials with zero constant terms in noncommuting indeterminates $a_{0}, a_{1}, a_{2}, b_{0}, b_{1}, b_{2}, c$ over $\mathbb{Z}_{2}$. Here, $A$ is a ring without identity. Consider an ideal $I$ of $\mathbb{Z}_{2}+A$, generated by \\
\begin{eqnarray*}
a_{0}b_{0}, a_{0}b_{1}+a_{1}b_{0}, a_{0}b_{2}+a_{1}b_{1}+a_{2}b_{0}, a_{1}b_{2}+a_{2}b_{1}, a_{2}b_{2}, a_{0}rb_{0}, a_{2}rb_{2}\\
(a_{0}+a_{1}+a_{2})r(b_{0}+b_{1}+b_{2}), r_{1}r_{2}r_{3}r_{4}
\end{eqnarray*}
where $r, r_{1}, r_{2}, r_{3}, r_{4} \in A$. Then clearly, $A^{4} \subseteq I$. Now, let $R = (\mathbb{Z}_{2}+A)/I$. By Example (2) of \cite{C} $R$ is semicommutative ring. Therefore, by Proposition \ref{pro4}, $R$ is a strongly nil-IFP. If we take $f(t) = a_{0}+a_{1}t+a_{2}t^{2}, g(t) = b_{0}+b_{1}t+b_{2}t^{2} \in R[t]$, then $f(t)g(t) = 0$ but $a_{0}b_{1} \neq 0$. So, $R$ is not an Armendariz ring. Thus, $R$ is not a weak ideal Armendariz ring.
\end{ex}
\begin{pro}\label{pro9}
\begin{itemize}
\item[$(1)$] The classes of strongly nil-IFP is closed under subrings.
\item [$(2)$] The classes of strongly nil-IFP is closed under direct sums.
\end{itemize}
\end{pro}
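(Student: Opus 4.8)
The plan is to treat the two parts separately, in each case unwinding the polynomial definition of the strongly nil-IFP property and invoking the standard behaviour of nilpotent elements under the relevant ring construction.

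For part $(1)$, let $S$ be a subring of a strongly nil-IFP ring $R$. I would take arbitrary $f(x), g(x) \in S[x]$ with $f(x)g(x) = 0$ and view them inside $R[x]$, where the product is still $0$. The hypothesis on $R$ then yields $arb \in N(R)$ for every $a \in C_{f(x)}$, $b \in C_{g(x)}$ and every $r \in R$, and in particular for every $r \in S$. The crucial remark is that $arb \in S$ (since $a,r,b \in S$) and that nilpotency is absolute, i.e. $N(S) = S \cap N(R)$; hence $arb \in N(S)$, which is precisely the strongly nil-IFP condition for $S$. This part is routine and carries no real difficulty.

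For part $(2)$, let $\{R_i\}_i$ be a family of strongly nil-IFP rings and set $R = \bigoplus_i R_i$. The plan is to project the identity $f(x)g(x)=0$ onto each component, exactly as in the argument of Proposition 2.7. Writing $f(x) = \sum_k a_k x^k$ and $g(x) = \sum_l b_l x^l$ with $a_k = (a_{k,i})_i$ and $b_l = (b_{l,i})_i$, and setting $f_i(x) = \sum_k a_{k,i} x^k$ and $g_i(x) = \sum_l b_{l,i} x^l$ in $R_i[x]$, componentwise multiplication shows that $f(x)g(x)=0$ forces $f_i(x)g_i(x) = 0$ in every $R_i[x]$. Applying the strongly nil-IFP property of each $R_i$ gives $a_{k,i}\,r_i\,b_{l,i} \in N(R_i)$ for all $r_i \in R_i$, so that for any $r=(r_i)_i \in R$ each coordinate of $a_k r b_l = (a_{k,i}\,r_i\,b_{l,i})_i$ is nilpotent.

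The step I expect to require the most care is the final reassembly, namely deducing that $a_k r b_l$ is genuinely nilpotent in $R$. This is where the restriction to a direct sum is essential: since $a_k$, $b_l$ and $r$ each have finite support, the element $a_k r b_l$ has only finitely many nonzero coordinates, and taking $N$ to be the maximum of their nilpotency indices gives $(a_k r b_l)^N = 0$, so $a_k r b_l \in N(R)$. (For a finite family this is just the equality $N(\prod_i R_i) = \prod_i N(R_i)$ already used in Proposition 2.7, whereas for an unrestricted infinite product the bound on the nilpotency index could fail, which is precisely why one works with direct sums.) Since this holds for all $a_k \in C_{f(x)}$, $b_l \in C_{g(x)}$ and all $r \in R$, the ring $R$ is strongly nil-IFP, as required.
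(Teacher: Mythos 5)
Your proposal is correct and follows essentially the same route as the paper: part $(1)$ by restriction of coefficients together with $N(S)=S\cap N(R)$, and part $(2)$ by componentwise projection of $f(x)g(x)=0$ followed by the identification $N\bigl(\bigoplus_i R_i\bigr)=\bigoplus_i N(R_i)$. Your explicit remark that the finite support of $a_k r b_l$ is what bounds the nilpotency index (and why this would fail for an unrestricted infinite product) is a welcome clarification that the paper leaves implicit.
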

\begin{proof}
\begin{itemize}
\item[$(1)$] It is obvious that subring of strongly nil-IFP is strongly nil-IFP.
\item[$(2)$] Suppose that $R_{\alpha}$ is strongly nil-IFP for each $\alpha \in \Gamma$ and let $R = \bigoplus_{\alpha \in \Gamma} R_{\alpha}$. If $f(x)g(x) = 0 \in R[x]$, then $f_{\alpha}(x)g_{\alpha}(x) = 0$, where $f_{\alpha}(x), g_{\alpha}(x) \in R_{\alpha}(x)$ for each $\alpha \in \Gamma$. Since each $R_{\alpha}$ is strongly nil-IFP, $a_{\alpha}r_{\alpha}b_{\alpha} \in N(R_{\alpha})$ for each $a_{\alpha} \in C_{f_{\alpha}}$ and $b_{\alpha} \in C_{g_{\alpha}}$ and for all $r_{\alpha} \in R_{\alpha}$. We know that $N(R) = \bigoplus_{\alpha \in \Gamma}N(R_{\alpha})$, therefore $R$ is a strongly nil-IFP.
\end{itemize}
\end{proof}
\begin{pro}\label{pro4} Every semicommutative ring is strongly nil-IFP.
\end{pro}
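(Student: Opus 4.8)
The plan is to reduce the whole statement to the reduced quotient of $R$. The crucial structural input is the fact, already recorded in the introduction, that every semicommutative ring is $2$-primal; that is, $N_{*}(R) = N(R)$. Since the prime radical $N_{*}(R)$ is always a two-sided ideal, this immediately tells us that in a semicommutative ring $N(R)$ is an ideal, and that the quotient $\overline{R} := R/N(R) = R/N_{*}(R)$ is reduced: if the image of some $c \in R$ is nilpotent in $\overline{R}$, then a power of $c$ lies in $N_{*}(R) = N(R)$, so $c$ is itself nilpotent and hence already belongs to $N(R)$.

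With this in hand I would argue as follows. Let $f(x), g(x) \in R[x]$ satisfy $f(x)g(x) = 0$, and fix $a \in C_{f(x)}$, $b \in C_{g(x)}$ and $r \in R$. Because $N(R)$ is an ideal, reduction modulo $N(R)$ is a ring homomorphism $R[x] \to \overline{R}[x]$, under which the hypothesis becomes $\overline{f}(x)\,\overline{g}(x) = 0$ in $\overline{R}[x]$. Now I invoke two standard properties of the reduced ring $\overline{R}$: first, every reduced ring is Armendariz (as observed by Armendariz and noted in the introduction), so $\overline{f}(x)\,\overline{g}(x) = 0$ forces $\overline{a}\,\overline{b} = 0$ for the images $\overline{a}$ of $a$ and $\overline{b}$ of $b$; second, every reduced ring is semicommutative, so $\overline{a}\,\overline{b} = 0$ yields $\overline{a}\,\overline{R}\,\overline{b} = 0$, and in particular $\overline{a}\,\overline{r}\,\overline{b} = \overline{arb} = 0$.

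Translating this last equality back to $R$ says precisely that $arb \in N(R)$. Since $a \in C_{f(x)}$, $b \in C_{g(x)}$ and $r \in R$ were arbitrary, this is exactly the defining condition of strongly nil-IFP, and the proposition follows.

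As for where the real work sits: the computation itself is routine once the quotient is available, so the load-bearing step is the passage to $\overline{R} = R/N(R)$. This is legitimate only because semicommutativity guarantees $2$-primality, which makes $N(R)$ an ideal and the quotient reduced; without that, $N(R)$ need not be closed under addition and the reduction homomorphism would not even exist. The insertion of the middle factor $r$, which is precisely what separates strongly nil-IFP from the weak Armendariz condition $ab \in N(R)$, is then supplied for free by the semicommutativity of the reduced quotient, so no separate nilpotency estimate is required.
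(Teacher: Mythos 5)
Your proof is correct, but it follows a different route from the paper's. The paper argues entirely inside $R$: it first invokes the (nontrivial, cited) theorem that semicommutative rings are weak Armendariz to get $ab\in N(R)$ for all $a\in C_{f(x)}$, $b\in C_{g(x)}$, and then uses the fact that in a semicommutative ring $ab\in N(R)$ forces $arb\in N(R)$ (by repeatedly inserting $r$ into the relation $(ab)^{n}=0$ via IFP). You instead pass to the quotient $R/N_{*}(R)$, using that semicommutativity gives $2$-primality so this quotient is reduced, and then harvest both the Armendariz and the insertion property for free from reducedness. Your argument is in substance the paper's proof of Proposition \ref{pro2}(1) (every $2$-primal ring is strongly nil-IFP) combined with the standard implication ``semicommutative $\Rightarrow$ $2$-primal'' noted in the introduction; so you are really proving the stronger statement of which this proposition is a special case. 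What your route buys is economy of input (only elementary facts about reduced rings and the prime radical) and greater generality; what the paper's route buys is a direct argument that isolates exactly where semicommutativity is used beyond weak Armendariz, namely in the nilpotency-preserving insertion $ab\in N(R)\Rightarrow arb\in N(R)$. Both are valid; your reduction to the reduced quotient is sound because $N_{*}(R)=N(R)$ is indeed an ideal here and $R/N_{*}(R)$ is reduced precisely for $2$-primal rings, as you correctly verify.
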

\begin{proof} It is known that semicommutative rings are weak Armendariz. So, for any $f(x)$ and $g(x) \in R[x]$ such that $f(x)g(x) = 0$ implies $ab\in N(R)$ for each $a \in C_{f(x)}$ and $b \in C_{g(x)}$. Since $R$ is a semicommutative ring, therefore $ab \in N(R)$ implies $arb \in N(R)$ for each $r \in R$. Hence, $R$ is a strongly nil-IFP.
\end{proof}
Converse of  above is not true. See the below given example:
\begin{ex} Construction of example is same as Example \ref{ex2}. Here, consider the ideal $I$ of $\mathbb{Z}_{2}+A$ which is generated by \\
\begin{eqnarray*}
a_{0}b_{0}, a_{0}b_{1}+a_{1}b_{0}, a_{0}b_{2}+a_{1}b_{1}+a_{2}b_{0}, a_{1}b_{2}+a_{2}b_{1}, a_{2}b_{2},\\
(a_{0}+a_{1}+a_{2})r(b_{0}+b_{1}+b_{2}), r_{1}r_{2}r_{3}r_{4}
\end{eqnarray*}
where $r, r_{1}, r_{2}, r_{3}, r_{4} \in A$. Then, $A^{4} \subseteq I$. Now, let $R = (\mathbb{Z}_{2}+A)/I$. Since, $a_{0}b_{0} = 0$ and $a_{0}b_{2}b_{0} \neq 0$, therefore $R$ is not a semicommutative ring. Also, whenever, $f(t)g(t)  \in R[t]$ such that $f(t)g(t) = 0$, then $aRb \subseteq N(R)$ for any and each $a \in C_{f(t)}$ and  $b \in C_{g(t)}$. Thus, $R$ is a strongly nil-IFP.
\end{ex}
\begin{pro}
Every strongly nil-IFP ring is a weakly semicommutative ring.
\end{pro}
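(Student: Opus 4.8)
The plan is to specialize the strongly nil-IFP condition to constant polynomials. Recall that weak semicommutativity is exactly the statement that $ab=0$ forces $arb \in N(R)$ for every $r \in R$, so the whole content of the implication is to produce, from a single relation $ab=0$ in $R$, a polynomial identity in $R[x]$ to which the strongly nil-IFP hypothesis applies.

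First I would fix arbitrary elements $a,b \in R$ with $ab=0$ and introduce the constant polynomials $f(x)=a$ and $g(x)=b$ in $R[x]$. Their product is the constant polynomial $f(x)g(x)=ab=0$, so the hypothesis of the strongly nil-IFP definition is satisfied. The only point requiring a word of justification is the identification of the coefficient sets: since $f$ and $g$ are constant, $C_{f(x)}=\{a\}$ and $C_{g(x)}=\{b\}$, so the elements of $C_{f(x)}$ and $C_{g(x)}$ are precisely $a$ and $b$ themselves.

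Applying the strongly nil-IFP property to this pair then yields $arb \in N(R)$ for every $r \in R$, which is exactly the defining condition of a weakly semicommutative ring. Since $a,b$ were arbitrary with $ab=0$, this completes the argument.

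There is no genuine obstacle here: the result is a direct unfolding of the definitions, and the implication is in fact the precise reason weak semicommutativity appears as a consequence of strongly nil-IFP in the flow chart. The only subtlety worth flagging is purely bookkeeping, namely that one is free to regard $a$ and $b$ as degree-zero polynomials so that the coefficient condition on $C_{f(x)}$ and $C_{g(x)}$ collapses to the single elements $a$ and $b$.
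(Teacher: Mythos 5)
Your proof is correct and is exactly the argument the paper intends (the paper's own proof is simply ``It is obvious''): specializing the strongly nil-IFP condition to the constant polynomials $f(x)=a$, $g(x)=b$ immediately yields $arb\in N(R)$ for all $r\in R$. Nothing further is needed.
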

\begin{proof} It is obvious.
\end{proof}
 Below given two examples show that weakly semicommutative ring/ Armendariz ring is not strongly nil-IFP. \\
\begin{ex} Let $K$ be a field and $K[a_{0}, a_{1}, b_{0}, b_{1}]$ be the free algebra with noncommuting indeterminates $a_{0}, a_{1}, b_{0}, b_{1}$ over $K$. Let $I$ be an ideal of $K[a_{0}, a_{1}, b_{0}, b_{1}]$ generated by\\

 $a_{0}b_{0}, a_{0}b_{1}+a_{1}b_{0}, a_{1}b_{1}, (r_{1}a_{0}r_{2}b_{0}r_{3})^{2}, (r_{4}a_{1}r_{5}b_{4}r_{6})^{2},$ $r_{7}(a_{0}+a_{1})r_{8}(b_{0}+b_{1})r_{9}$,\\

where $r_{1}, r_{2}, r_{3}, r_{4}, r_{5}, r_{6}, r_{7}, r_{8}, r_{9} \in K[a_{0}, a_{1}, b_{0}, b_{1}]$. Next, let $R = K[a_{0}, a_{1}, b_{0}, b_{1}]/I$. To prove $R$ is not a strongly nil IFP, consider $f(x) = a_{0}+a_{1}x$ and $g(x) = b_{0}+b_{1}x$. Then $f(x)g(x) = a_{0}b_{0}+(a_{0}b_{1}+a_{1}b_{0})x+a_{1}b_{1}x^{2} = 0$ but $a_{0}a_{0}b_{1} \notin N(R)$. Therefore, $R$ is not a strongly nil IFP. By construction of ideal, it is clear that $R$ is a weakly semicommutative ring.
\end{ex}
\begin{ex}\label{ex1} Let $K$ be a field and $R = K[a, b]/<a^{2}>$. Then $R$ is an Armendariz ring by Example (4.8) of \cite{RR}. Therefore $R$ is a weak Armendariz ring. Here, if we take $f(t) = ba + ba t$, $g(t) = a + at$, then $f(t)g(t) = 0$ but $(ba)b(a)$ is not nilpotent. Thus, $R$ is not a strongly nil-IFP.
\end{ex}

\begin{pro}\label{pro5} If $R$ is Armendariz and $QRPR$, then $R$ is strongly nil-IFP.
\end{pro}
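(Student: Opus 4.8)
The plan is to use the Armendariz hypothesis to strip away the polynomial variable, and then to exploit the $QRPR$ property together with the standard facts that the prime radical $N_{*}(R)$ is a two-sided ideal and that it is nil. First I would take $f(x), g(x) \in R[x]$ with $f(x)g(x) = 0$. Since $R$ is Armendariz, this gives $ab = 0$ for every $a \in C_{f(x)}$ and $b \in C_{g(x)}$. Fixing such $a, b$ together with an arbitrary $r \in R$, the task reduces to showing $arb \in N(R)$.

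The key step is to feed the relation $ab = 0$ into the $QRPR$ property, which yields $ba \in N_{*}(R)$. Then I would square the element in question: $(arb)^{2} = arb \cdot arb = ar(ba)rb$. Because $ba \in N_{*}(R)$ and $N_{*}(R)$ is a two-sided ideal, the element $ar(ba)rb$ again lies in $N_{*}(R)$, so $(arb)^{2} \in N_{*}(R)$. Since the prime radical is a nil ideal, $(arb)^{2}$ is nilpotent, and therefore $arb$ is nilpotent, i.e. $arb \in N(R)$. As $a \in C_{f(x)}$, $b \in C_{g(x)}$ and $r \in R$ were arbitrary, this shows that $R$ is strongly nil-IFP.

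I do not anticipate a genuine obstacle here; the only point requiring care is the justification that $N_{*}(R)$ is a nil ideal stable under left and right multiplication by arbitrary ring elements, which is precisely what allows the single nilpotency-producing membership $ba \in N_{*}(R)$ to survive the insertion of the factors $r$ on either side and to force $arb$ itself into $N(R)$. The remainder is formal: the Armendariz condition removes the indeterminate, and squaring $arb$ is exactly the device that manufactures the product $ba$ in the middle so that the $QRPR$ conclusion becomes applicable.
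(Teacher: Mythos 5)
Your proof is correct and follows essentially the same route as the paper: Armendariz reduces to $ab=0$, $QRPR$ gives $ba\in N_{*}(R)$, and the ideal property of the nil ideal $N_{*}(R)$ forces $arb\in N(R)$. Your squaring device $(arb)^{2}=ar(ba)rb$ is just a slightly more explicit rendering of the paper's final step, which instead passes from $b_{j}a_{i}R\subseteq N(R)$ to $a_{i}Rb_{j}\subseteq N(R)$ via the standard fact that $xy$ is nilpotent if and only if $yx$ is.
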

\begin{proof} Let $f(x) = \sum_{i=0}^{m}a_{i}x^{i}$ and $g(x) = \sum_{j=0}^{n}b_{j}x^{j}$ $\in R[x]$ such that $f(x)g(x) = 0$. Then $a_{i}b_{j} = 0$ for each $i, j$. Also, $R$ is quasi-reversible-over-prime-radical, therefore $b_{j}a_{i} \in N_{*}(R)$ and this implies $b_{j}a_{i}R \subseteq N_{*}(R)$ and $b_{j}a_{i}R \subseteq N(R)$. Hence, $a_{i}Rb_{j}\subseteq N(R)$ for each $i, j$. Thus, $R$ is a strongly nil-IFP.
\end{proof}
\begin{pro}\label{pro1} Strongly nil-IFP ring is weak Armendariz ring.
\end{pro}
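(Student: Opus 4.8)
The plan is to obtain the weak Armendariz condition as the special case $r = 1$ of the strongly nil-IFP hypothesis. Recall that throughout the paper $R$ is assumed to carry a multiplicative identity, so $1 \in R$ is a legitimate choice for the ring element $r$ appearing in the definition of strongly nil-IFP. The whole argument rests on this single observation: strongly nil-IFP demands $arb \in N(R)$ for \emph{every} $r \in R$, whereas weak Armendariz only asks for the product $ab$ to be nilpotent, which is precisely the instance $r = 1$.

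Concretely, I would take arbitrary $f(x), g(x) \in R[x]$ with $f(x)g(x) = 0$ and fix coefficients $a \in C_{f(x)}$ and $b \in C_{g(x)}$. Since $R$ is strongly nil-IFP, the hypothesis $f(x)g(x) = 0$ yields $arb \in N(R)$ for all $r \in R$. Specializing $r = 1$ gives $ab = a \cdot 1 \cdot b \in N(R)$. As $a$ and $b$ were arbitrary elements of $C_{f(x)}$ and $C_{g(x)}$, this is exactly the defining requirement of a weak Armendariz ring, and the proof is complete.

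I expect no genuine obstacle here: the content of the proposition is simply that strongly nil-IFP is a formally stronger condition than weak Armendariz, obtained by quantifying over all sandwich factors $r$ rather than only the identity. The one point deserving explicit mention is the presence of the identity element, which is what makes the substitution $r = 1$ valid; without it the implication would not be immediate. It would also be natural to note in a closing remark that the converse fails, as already witnessed by Example \ref{ex1}, where $R = K[a,b]/\langle a^{2} \rangle$ is Armendariz (hence weak Armendariz) yet not strongly nil-IFP, since $f(t)g(t) = 0$ there while $(ba)b(a)$ is not nilpotent.
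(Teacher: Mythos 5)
Your proof is correct and takes essentially the same approach as the paper: both arguments simply specialize the sandwich element $r$ in the strongly nil-IFP condition, with $r=1$ when $R$ has an identity. The paper's only addition is a fallback for rings without unity, where it substitutes $r = b_{j}a_{i}$ to obtain $(a_{i}b_{j})^{2} \in N(R)$ and hence $a_{i}b_{j}\in N(R)$; under the standing assumption that $R$ has identity, your version suffices, and you rightly flag that the identity is the point on which the substitution rests.
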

\begin{proof} Let $f(x) = \sum_{i=0}^{m}a_{i}x^{i}$, $g(x) = \sum_{j=0}^{n}b_{j}x^{j} \in R[x]$ such that $f(x)g(x) = 0$. Then $a_{i}Rb_{j}\subseteq N(R)$  for each $i, j$. If $R$ has unity, then $a_{i}b_{j} \in N(R)$ for each $i, j$. If ring has no unity, then $a_{i}(b_{j}a_{i})b_{j} \in N(R)$ for each $i, j$. This implies $(a_{i}b_{j})^{2} \in N(R)$ and hence $a_{i}b_{j} \in N(R)$ for each $i, j$. Thus, $R$ is a weak Armendariz ring.
\end{proof}

\begin{rem}
\begin{itemize}
\item[$(1)$] By Example \ref{ex1}, we can say that converse of above is not true.
\item[$(2)$] Every reversible ring is strongly nil-IFP but converse is not true.
\end{itemize}
\end{rem}

\begin{ex} Let $\mathbb{Z}_{2}$ be the field of integers modulo $2$ and $A =  \mathbb{Z}_{2}[a_{0}, a_{1}, a_{2}, b_{0}, b_{1}, b_{2}]$ be the free algebra of polynomials with zero constant term in noncommuting indeterminates $a_{0}, a_{1}, a_{2}, b_{0}, b_{1}, b_{2}$ over $\mathbb{Z}_{2}$. Here, $A$ is a ring without identity. Consider an ideal of $\mathbb{Z}_{2}+A$, say $I$, generated by $a_{0}b_{0}, a_{0}b_{1}+a_{1}b_{0}, a_{0}b_{2}+a_{1}b_{1}+a_{2}b_{0}, a_{1}b_{2}+a_{2}b_{1}, a_{2}b_{2}$ and $s_{1}s_{2}s_{3}s_{4}s_{5}s_{6}$ where $s_{1}, s_{2}, s_{3}, s_{4}, s_{5}, s_{6} \in A$. Let $R= \frac{\mathbb{Z}_{2}+A}{I}$. Here, $R$ is a strongly nil-IFP. Since, $a_{0}b_{0} = 0$ but $b_{0}a_{0} \neq 0$, therefore $R$ is not a reversible ring.
\end{ex}
\begin{lem} If $R$ is strongly nil-IFP with no non-zero nil ideals, then $R$ is reversible ring.
\end{lem}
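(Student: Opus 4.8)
The plan is to force $R$ to be reduced, after which reversibility is essentially free. Indeed, once I know $R$ has no nonzero nilpotent elements, any relation $ab = 0$ gives $(ba)^2 = b(ab)a = b\cdot 0 \cdot a = 0$, so $ba$ is nilpotent and therefore $ba = 0$; hence reducedness is the real target, and the whole proof reduces to establishing $N(R) = 0$.

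First I would record that a strongly nil-IFP ring is weakly semicommutative, which is exactly the proposition proved just above, so $R$ is weakly semicommutative. Next I would translate the hypothesis ``no nonzero nil ideals'' into semiprimeness: the prime radical $N_{*}(R)$ is always a nil ideal, so the hypothesis forces $N_{*}(R) = 0$, i.e. $R$ is semiprime. With these two facts in hand I would invoke the remark recorded before the semiprime right Goldie proposition, namely that for a semiprime ring being weakly semicommutative is equivalent to being reduced (for a semiprime ring one has $N_{*}(R) = 0$, and weak semicommutativity forces $N(R) = N_{*}(R)$). Applying this equivalence to our semiprime, weakly semicommutative $R$ yields $N(R) = 0$, so $R$ is reduced, and by the first paragraph $R$ is reversible.

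The delicate point, and the one I would flag as the main obstacle, is the step ``semiprime $+$ weakly semicommutative $\Rightarrow$ reduced,'' which carries all the content; I lean on the remark stated earlier rather than reprove it. It is worth noting why the tempting direct route is avoided. Starting from $ab = 0$ and $c = ba$ with $c^2 = 0$, one would like to show that the two-sided ideal generated by $c$ is nil, hence zero. Every monomial $r\,(ba)\,s$ is indeed nilpotent: setting $v = a(sr)b$, weak semicommutativity gives $v \in N(R)$, and a direct induction shows $\bigl(r\,(ba)\,s\bigr)^{n+1} = (rb)\,v^{n}\,(as)$, which vanishes once $v^{n} = 0$. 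However, passing from these monomials to a general element $\sum_i r_i\,(ba)\,s_i$ of the ideal requires controlling sums of such nilpotents, which runs straight into Köthe-type difficulties. For this reason I would not try to show the ideal is nil by hand, and instead route the argument through semiprimeness and the reduced/weakly-semicommutative equivalence, where the sum issue never arises.
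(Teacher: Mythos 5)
Your argument reaches the conclusion by a genuinely different route from the paper's. The paper works directly with the element $ba$: from $ab=0$ and the strongly nil-IFP condition it gets $aRb\subseteq N(R)$, observes that $(rba)^{n+1}=rb(arb)^{n}a=0$, so that $Rba$ is a nil one-sided ideal, and then kills $Rba$ using the hypothesis that $R$ has no nonzero nil ideals; $ba=0$ follows. In other words, the paper takes exactly the ``direct route'' you chose to avoid, and it handles the sum-of-nilpotents problem you worried about simply by passing from the single element $ba$ to the one-sided ideal $Rba$ (which is nil because each $rba$ is) and then asserting that a ring with no nonzero nil two-sided ideals has no nonzero nil one-sided ideals. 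That assertion is the same K\"othe-type passage you flagged, so your caution is well placed: the paper does not escape the difficulty, it states it as if it were immediate. Your alternative pushes the entire burden onto the remark ``semiprime $+$ weakly semicommutative $\Rightarrow$ reduced,'' which the paper records before the Goldie proposition but never proves; your parenthetical justification of it --- that weak semicommutativity forces $N(R)=N_{*}(R)$ --- is an overclaim, since it would say every weakly semicommutative ring is $2$-primal, which is established nowhere in the paper and does not follow from the definition. So both proofs rest on one unproved step of essentially the same depth; yours is explicit about where that step sits, and everything surrounding it is correct (the reduction to reducedness, $(ba)^{2}=b(ab)a=0$, the semiprimeness of a ring with no nonzero nil ideals, and the identity $\bigl(r(ba)s\bigr)^{n+1}=(rb)v^{n}(as)$ with $v=a(sr)b$). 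What the paper's route buys is brevity and self-containedness; what yours buys is an honest isolation of the single genuinely delicate implication.
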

\begin{proof} To prove $R$ is reversible ring, let $ab = 0$. Then $ax bx = 0$ and this implies $aRb \subseteq N(R)$. Therefore, $Rba \subseteq N(R)$, hence $Rba$ is nil one sided ideal of $R$. Also, $Rba = \{0\}$, so $ba = 0$. Hence, $R$ is reversible ring.
\end{proof}

\begin{pro}\label{pro7} Let $R$ be a 2-primal ring. If $f(x)R[x]g(x) \in N_{*}(R)[x]$, then $aRb \subseteq N_{*}(R)$ for each $a \in C_{f(x)}$ and $b \in C_{g(x)}$.
\end{pro}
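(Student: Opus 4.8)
The plan is to work in the factor ring $\bar{R} = R/N_{*}(R)$ and exploit that it is reduced. Indeed, since $R$ is $2$-primal we have $N_{*}(R) = N(R)$, and this forces $\bar{R}$ to be reduced: if $\bar{c}^{\,k} = \bar{0}$ for some $c \in R$, then $c^{k} \in N_{*}(R) = N(R)$ is nilpotent, so $c$ itself is nilpotent, whence $c \in N(R) = N_{*}(R)$ and $\bar{c} = \bar{0}$. By Armendariz's original observation [\cite{E}, Lemma 1], the reduced ring $\bar{R}$ satisfies the Armendariz condition, and this is the single property on which the whole argument will run.

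Next I would transport the hypothesis to $\bar{R}[x]$ through the canonical ring surjection $\pi \colon R[x] \to \bar{R}[x]$ sending $\sum_{k} c_{k}x^{k}$ to $\sum_{k} \bar{c_{k}}x^{k}$, whose kernel is precisely $N_{*}(R)[x]$. Fix an arbitrary $r \in R$. Since $f(x)\,r\,g(x) \in f(x)R[x]g(x) \subseteq N_{*}(R)[x]$, applying $\pi$ yields $\bar{f}(x)\,\bar{r}\,\bar{g}(x) = \bar{0}$ in $\bar{R}[x]$, where $\bar{f} = \pi(f)$ and $\bar{g} = \pi(g)$. I would rewrite this as $\bar{f}(x)\,\big(\bar{r}\,\bar{g}(x)\big) = \bar{0}$ and note that the coefficients of the polynomial $\bar{r}\,\bar{g}(x)$ are exactly the elements $\bar{r}\,\bar{b}$ with $b \in C_{g(x)}$. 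Because $\bar{R}$ is Armendariz, every coefficient of $\bar{f}$ annihilates every coefficient of $\bar{r}\,\bar{g}$; that is, $\bar{a}\,(\bar{r}\,\bar{b}) = \bar{0}$, equivalently $\bar{a}\,\bar{r}\,\bar{b} = \bar{0}$, for all $a \in C_{f(x)}$ and $b \in C_{g(x)}$.

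Finally I would translate back: $\bar{a}\,\bar{r}\,\bar{b} = \bar{0}$ means $a r b \in N_{*}(R)$. As $r \in R$ was arbitrary, this gives $aRb \subseteq N_{*}(R)$ for each $a \in C_{f(x)}$ and $b \in C_{g(x)}$, which is the assertion. The crux of the argument is the structural implication that a $2$-primal ring has reduced quotient modulo its prime radical; once $\bar{R}$ is known to be reduced, hence Armendariz, the remainder is a routine comparison of coefficients, and it is worth observing that only constant insertions $h = r$ drawn from $R[x]$ are needed rather than the full set $f(x)R[x]g(x)$.
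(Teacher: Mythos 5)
Your argument is correct and follows essentially the same route as the paper: reduce modulo $N_{*}(R)$, use $2$-primality to see that the quotient is reduced and hence Armendariz, and apply the Armendariz condition to $\bar{f}(x)\bigl(\bar{r}\,\bar{g}(x)\bigr)=\bar{0}$ to extract $\bar{a}\,\bar{r}\,\bar{b}=\bar{0}$. The paper's proof is just a terser version of this; your write-up supplies the coefficient-comparison details it leaves implicit.
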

\begin{proof} Let $f(x), g(x) \in R[x]$ such that $f(x)R[x]g(x) \in N_{*}(R)[x]$. Since $R/N_{*}(R)$ is reduced, therefore $R/N_{*}(R)$ is an Armendariz ring and hence $aRb \subseteq N_{*}(R)$ for each $a \in C_{f(x)}$ and $b \in C_{g(x)}$.
\end{proof}
\begin{pro}\label{pro2}
\begin{itemize}
\item[$(1)$] Every 2-primal ring is strongly nil-IFP.
\item[$(2)$] Every NI ring is strongly nil-IFP.
\end{itemize}
\end{pro}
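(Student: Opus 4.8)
The plan is to handle both parts by the same device: pass to the quotient of $R$ by the relevant nil radical, observe that this quotient is reduced, and then exploit the fact that a reduced ring is simultaneously Armendariz and semicommutative. Recall that a reduced ring is Armendariz (the observation of Armendariz recalled in the introduction) and is also semicommutative, since for reduced $\bar{R}$ the equality $\bar{a}\bar{b}=\bar{0}$ gives $\bar{b}\bar{a}=\bar{0}$ (reducedness makes $\bar{R}$ reversible), whence $(\bar{a}\bar{r}\bar{b})^{2}=\bar{a}\bar{r}(\bar{b}\bar{a})\bar{r}\bar{b}=\bar{0}$ and so $\bar{a}\bar{r}\bar{b}=\bar{0}$ for every $\bar{r}$. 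It is the \emph{combination} of these two properties that will convert a coefficient-wise vanishing into the desired insertion-of-factor conclusion.

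For part $(2)$, I would start from the NI hypothesis $N^{*}(R)=N(R)$. Since $N^{*}(R)$ is a nil ideal, the quotient $\bar{R}=R/N^{*}(R)$ is reduced: a nilpotent coset $\bar{x}$ forces $x^{n}\in N^{*}(R)=N(R)$, hence $x\in N(R)=N^{*}(R)$ and $\bar{x}=\bar{0}$. Now, given $f(x)g(x)=0$ in $R[x]$, I would reduce modulo $N^{*}(R)$ to get $\bar{f}(x)\bar{g}(x)=\bar{0}$ in $\bar{R}[x]$; the Armendariz property of $\bar{R}$ yields $\bar{a}\bar{b}=\bar{0}$ for each $a\in C_{f(x)}$ and $b\in C_{g(x)}$, and semicommutativity of $\bar{R}$ then upgrades this to $\bar{a}\bar{r}\bar{b}=\bar{0}$ for every $r\in R$. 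Translating back, $arb\in N^{*}(R)=N(R)$ for all such $a,b$ and all $r$, which is precisely the strongly nil-IFP condition.

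For part $(1)$, I would note that every 2-primal ring is NI: from the standard inclusions $N_{*}(R)\subseteq N^{*}(R)\subseteq N(R)$ together with $N_{*}(R)=N(R)$ one gets $N^{*}(R)=N(R)$. Thus $(1)$ is a special case of $(2)$. Alternatively, I could argue directly with $N_{*}(R)$ in place of $N^{*}(R)$, using that $R/N_{*}(R)$ is reduced for a 2-primal ring (as already used in the proof of Proposition \ref{pro7}); indeed, once $f(x)g(x)=0$ is pushed through the reduced quotient to give $f(x)R[x]g(x)\subseteq N_{*}(R)[x]$, Proposition \ref{pro7} directly delivers $aRb\subseteq N_{*}(R)=N(R)$.

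I do not anticipate a genuine obstacle here, as the argument is essentially a transfer principle. The only point demanding care is the verification that the appropriate quotient is honestly reduced, and that the reduced quotient carries \emph{both} the Armendariz and the semicommutative properties at once; isolating and combining these two is the crux, since Armendariz alone gives $\bar{a}\bar{b}=\bar{0}$ but not the insertion of the arbitrary factor $r$, while semicommutativity alone does not bound the coefficients of the product.
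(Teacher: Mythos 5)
Your proposal is correct and follows essentially the same route as the paper: pass to the quotient by the relevant nil radical, use that this quotient is reduced (hence Armendariz and semicommutative) to get $\bar{a}\bar{r}\bar{b}=\bar{0}$, and pull back to $arb\in N(R)$. The only cosmetic difference is that you prove $(2)$ first and deduce $(1)$ from the inclusion $N_{*}(R)\subseteq N^{*}(R)\subseteq N(R)$, whereas the paper argues $(1)$ directly with $N_{*}(R)$ and declares $(2)$ obvious by the analogous argument.
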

\begin{proof}\begin{itemize}
\item[$(1)$] Let $f(x) = \sum_{i=0}^{m}a_{i}x^{i}$ and $g(x) = \sum_{j=0}^{n}b_{j}x^{j}\in R[x]$ such that $f(x)g(x) = 0$. Then $\overline{f(x)}\overline{g(x)} = \overline{0}$ in $(R/N_{*}(R))[x]$. Since, $R/N_{*}(R)$ is reduced ring, therefore $R/N_{*}(R)$ is an Armendariz ring. This implies $a_{i}b_{j}+N_{*}(R) = N_{*}(R)$, $a_{i}b_{j} \in N_{*}(R)$ for each $i, j$ and hence $b_{j}a_{i} \in N_{*}(R)$, so $b_{j}a_{i}R \in N_{*}(R)$. This implies $a_{i}Rb_{j} \in N_{*}(R)$. Thus, $R$ is a strongly nil-IFP.
\item[$(2)$] It is obvious.
\end{itemize}
\end{proof}

\begin{pro} Let $R$ be a ring of bonded index of nilpotency $2$. Then the following conditions are equivalent:
\begin{itemize}
\item[$(1)$] $R$ is strongly nil-IFP.
\item[$(2)$] $R$ is NI.
\end{itemize}
\end{pro}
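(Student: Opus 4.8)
The implication $(2)\Rightarrow(1)$ needs no new argument: if $R$ is $NI$ then $R$ is strongly nil-IFP by Proposition \ref{pro2}$(2)$. So the whole task is $(1)\Rightarrow(2)$, and here the plan is to show directly that the set $N(R)$ of nilpotent elements is an ideal; since $N(R)$ is then a nil ideal we get $N(R)\subseteq N^{*}(R)$, and as $N^{*}(R)\subseteq N(R)$ always holds this yields $N^{*}(R)=N(R)$, i.e. $R$ is $NI$. Throughout I would use two facts: first, that a strongly nil-IFP ring is weakly semicommutative, so $ab=0$ implies $arb\in N(R)$ for every $r\in R$; and second, that bounded index of nilpotency $2$ means precisely $N(R)=\{a\in R:a^{2}=0\}$, so a product is proved to lie in $N(R)$ as soon as it is shown to be nilpotent.

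First I would establish that $N(R)$ is closed under multiplication by arbitrary ring elements. Let $a\in N(R)$, so $a^{2}=0$, and let $r\in R$. The trick is to feed the vanishing product $a\,(ar)=a^{2}r=0$ into weak semicommutativity: this gives $a\,s\,(ar)\in N(R)$ for every $s$, and choosing $s=r$ yields $(ar)^{2}=arar\in N(R)$. Hence $(ar)^{2}$, and therefore $ar$, is nilpotent, so $ar\in N(R)$. The symmetric computation, starting from $(ra)\,a=ra^{2}=0$ and taking $s=r$ in $(ra)\,s\,a\in N(R)$, gives $ra\in N(R)$. Thus $N(R)$ absorbs multiplication on both sides.

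The decisive step is closure under addition, and this is where the index-$2$ hypothesis does the real work. Take $a,b\in N(R)$, so $a^{2}=b^{2}=0$. Then $(a+b)^{2}=ab+ba$, and squaring again gives $(a+b)^{4}=(ab+ba)^{2}=abab+abba+baab+baba$; here $abba=ab^{2}a=0$ and $baab=ba^{2}b=0$, so that $(a+b)^{4}=(ab)^{2}+(ba)^{2}$. By the previous paragraph $ab$ and $ba$ already lie in $N(R)$, so bounded index $2$ forces $(ab)^{2}=(ba)^{2}=0$, whence $(a+b)^{4}=0$ and $a+b\in N(R)$. Since $N(R)$ is visibly closed under negation and contains $0$, combining this with the multiplicative closure shows $N(R)$ is an ideal, which completes the proof.

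The only delicate point I anticipate is the addition step: one must resist trying to prove the stronger relation $ab+ba=0$ (which NI together with index $2$ forces in hindsight) and instead argue the nilpotency of $a+b$ through its fourth power, where the index-$2$ hypothesis can be applied to the already-nilpotent elements $ab$ and $ba$. The multiplicative step is routine once the auxiliary pair $(a,ar)$ is spotted; note that only weak semicommutativity, and not the full polynomial form of strongly nil-IFP, is actually needed for $(1)\Rightarrow(2)$.
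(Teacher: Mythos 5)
Your proof is correct and takes essentially the same route as the paper: both arguments show that $N(R)=\{a : a^{2}=0\}$ is an ideal by establishing closure under addition via $(a+b)^{4}=(ab)^{2}+(ba)^{2}$ and closure under multiplication via weak semicommutativity applied to products like $a\,(ar)=0$. Your ordering (multiplicative closure first) is in fact slightly cleaner, since it supplies the justification $(ab)^{2}=(ba)^{2}=0$ that the paper's assertion $abab+baba=0$ tacitly relies on.
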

\begin{proof}
$(1)\Rightarrow (2)$ Let $a, b \in N(R)$. Then $a^{2} = 0, b^{2} = 0$. Now, $(a+b)^{2} = a^{2}+ab+ba+b^{2} = ab + ba$ and $(a+b)^{4} = abab +baba = 0$ and so $(a+b)^{2} = 0$. Thus, $a+b \in N(R)$.\\
Again, $ab = 0$ implies $Rab = 0$ and hence $RaRb \subseteq N(R)$. In particular, $a^{2} = 0$, then $RaRa \subseteq N(R)$ and hence $Ra\subseteq N(R)$. Thus, $ar, ra \in N(R)$.

$(2)\Rightarrow (1)$ It is obvious by Proposition \ref{pro2}.
\end{proof}

\begin{pro} For a ring $R$, let $R/I$ be strongly nil IFP. If $I$ is a semicommutative ideal of $R$, then $R$ is a strongly nil IFP.
\end{pro}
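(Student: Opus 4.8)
The plan is to reduce the product condition in $R[x]$ to the ground ring by the standard coefficient-comparison argument, and then combine the hypothesis on $R/I$ with the semicommutativity of the ideal $I$ to control the relevant products modulo nilpotents. Concretely, I would take $f(x)=\sum_{i=0}^{m}a_{i}x^{i}$ and $g(x)=\sum_{j=0}^{n}b_{j}x^{j}$ in $R[x]$ with $f(x)g(x)=0$, and fix an arbitrary $r\in R$; the goal is to show $a_{i}rb_{j}\in N(R)$ for all $i,j$. Passing to the quotient, $\overline{f(x)}\,\overline{g(x)}=\overline{0}$ in $(R/I)[x]$, so since $R/I$ is strongly nil-IFP we get $\overline{a_{i}}\,\overline{r}\,\overline{b_{j}}\in N(R/I)$ for every $i,j$ and every $r$. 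Hence there is an integer $k$ with $(a_{i}rb_{j})^{k}\in I$ for all $i,j$ (taking $k$ to be the maximum over the finitely many pairs).

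Next I would use the semicommutativity of the ideal $I$ to upgrade ``$(a_{i}rb_{j})^{k}\in I$'' to genuine nilpotency in $R$. The idea is that once a high power lands in $I$, the IFP behaviour inside $I$ forces products built from these elements to collapse. Writing $c=a_{i}rb_{j}$, we have $c^{k}\in I$; I would then look at the two-sided structure $c^{k}Rc^{k}\subseteq I$ and exploit that $I$, being a semicommutative ideal, satisfies $uv\in I$ (for $u,v\in I$, or more precisely for the induced IFP on $I$) implying $uRv\subseteq I$, to push successive powers of $c$ deeper. This is the mechanism already used in Proposition~2.7 for a \emph{reduced} ideal (where one gets $(c^{k}I)^{2}=0$ hence $c^{k}I=0$); here the ideal is only semicommutative, so instead of an outright vanishing I expect to obtain that $c^{k}$ together with the IFP of $I$ generates a nil subsystem, giving $c^{t}=0$ for some $t$.

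The main obstacle — and the step I would spend the most care on — is exactly this passage from $c^{k}\in I$ to $c\in N(R)$ using only that $I$ is \emph{semicommutative} rather than reduced. Semicommutativity of $I$ gives the insertion-of-factors property ($ab\in I\Rightarrow aRb\subseteq I$ within the relevant submodule), but it does not by itself kill nilpotents the way reducedness does, so I must genuinely produce a bound on the nilpotency index. My intended route is to show that $c^{k}\in I$ forces $c^{k}$ to lie in a nil portion of $I$: since $I$ is semicommutative it is $2$-primal, so $N_{*}(I)=N(I)$, and any element of $I$ whose powers stay inside $I$ and behave like a one-sided nil object (as in the Lemma preceding this proposition, where $Rba$ is shown to be a nil one-sided ideal) must be nilpotent.

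Finally, having established $a_{i}rb_{j}\in N(R)$ for all coefficients $a_{i}\in C_{f(x)}$, $b_{j}\in C_{g(x)}$ and all $r\in R$, this is precisely the defining condition for $R$ to be strongly nil-IFP, completing the argument. The only nonroutine ingredient is the nilpotency-index bound of the middle paragraph; the coefficient comparison and the final assembly are formal.
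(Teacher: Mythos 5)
There is a genuine gap in the middle step, and it is exactly the step you flag as the main obstacle. After passing to $R/I$ you retain only the information $(a_irb_j)^k\in I$, and you then try to deduce $a_irb_j\in N(R)$ from ``$c^k\in I$ and $I$ is a semicommutative ideal'' alone. That implication is false in general: take $R$ any commutative domain and $I=R$ (a semicommutative ideal); then $c^k\in I$ for every $c$, yet no nonzero $c$ is nilpotent. (The proposition itself is not contradicted there, since $R$ is semicommutative and hence strongly nil-IFP by Proposition \ref{pro4}, but your proposed mechanism for the key step is.) The appeal to $2$-primality of $I$ and to the nil one-sided ideal Lemma does not repair this: nothing forces $c^k$ into a nil portion of $I$ merely because it lies in $I$. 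The essential information you have discarded is the family of exact identities $\sum_{i+j=l}a_ib_j=0$ in $R$ (not merely modulo $I$) coming from $f(x)g(x)=0$, and without them the nilpotency of $a_irb_j$ cannot be recovered.

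The paper's proof keeps those identities and runs an induction on $i+j$. In the base case it uses the genuine equation $a_0b_0=0$ together with $(a_0rb_0)^p\in I$ and the insertion property of $I$ (e.g.\ $rb_0(a_0rb_0)^p\in I$ and $(a_0rb_0)^pa_0r\in I$, so factors from $R$ may be inserted) to produce the explicit vanishing $(a_0rb_0)^{4p+2}=0$. In the inductive step it multiplies the relation $a_0b_l+a_1b_{l-1}+\cdots+a_lb_0=0$ on the right by $(a_0rb_l)^{q+1}$, shows each term $a_ib_{l-i}(a_0rb_l)^{q+1}$ with $i\geq 1$ lies in $N(I)$ using the induction hypothesis and repeated insertion inside $I$, and thereby isolates $a_0b_l(a_0rb_l)^{q+1}\in N(I)$, from which $(a_0rb_l)^{4q+3}\in N(I)$ and hence $a_0Rb_l\subseteq N(R)$; the remaining products $a_iRb_{l-i}$ are handled analogously. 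Your outline correctly sets up the quotient step and the final assembly, but the passage from $c^k\in I$ to $c\in N(R)$ needs this coefficient-by-coefficient induction using the exact relations; as stated, your argument would not close.
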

\begin{proof} Let $f(x) = \sum_{i=0}^{m}a_{i}x^{i}$ and $g(x) = \sum_{j=0}^{n}b_{i}x^{j}\in R[x]$ be such that $f(x)g(x) = 0$. This implies
\begin{equation} \label{1}
\sum_{k=0}^{m+n}(\sum_{i+j=k}a_{i}b_{j})x^{k} = 0.
\end{equation}
Hence, we have the following equations
\begin{equation}\label{2}
  \sum_{i+j = l}a_{i}b_{j} = 0,~~~l = 0, 1, 2,\ldots, m+n.
\end{equation}
Also, $f(x)g(x) = 0$ implies that $\overline{f(x)}\overline{g(x)} = \overline{0} \in (R/I)[x]$. Since $R/I$ is strongly nil IFP, so $\overline{a_{i}}\overline{R}~\overline{b_{j}} \subseteq N(R/I)$ for each $i, j$ for $0 \leq i\leq m$ and $0 \leq j \leq n$. This implies, there exists a positive integer $n_{ij}$ such that $(a_{i}rb_{j})^{n_{ij}}\in I$ for each $i, j$ and all $r \in R$. Now, we use principle of induction on $i+j$ to prove $a_{i}rb_{j} \in N(R)$.\\

If $i+j = 0$, we have $a_{0}b_{0} = 0$. Also from above, there exists a positive integer $p$ such that $(a_{0}rb_{0})^{p} \in I$. This implies $(a_{0}rb_{0})^{p}a_{0}, b_{0}(a_{0}rb_{0})^{p} \in I$. Therefore, $(a_{0}rb_{0})^{p}a_{0}b_{0}(a_{0}rb_{0})^{p} = 0$. Since, $rb_{0}(a_{0}rb_{0})^{p}, (a_{0}rb_{0})^{p}a_{0}r \in I$, so $(a_{0}rb_{0})^{p}a_{0}rb_{0}(a_{0}rb_{0})^{p}b_{0}\\(a_{0}rb_{0})^{p} = 0$ and again $(a_{0}rb_{0})^{p}a_{0}rb_{0}(a_{0}rb_{0})^{p}(a_{0}rb_{0})^{p}a_{0}r b_{0}(a_{0}rb_{0})^{p} = 0$, hence $(a_{0}rb_{0})^{4p+2} = 0$. Thus, $a_{0}Rb_{0} \subseteq N(R)$.\\

Let the result is true for all positive integers less than $l$, i.e. $a_{i}Rb_{j} \subseteq N(R)$, when $i+j<l$.
Let there exists a positive integer $q$ such that $(a_{0}rb_{l})^{q} \in I$ for a fixed $r \in R$. Also, by assumption, we have $a_{0}Rb_{l-1} \subseteq N(R)$, so $b_{l-1}a_{0} \in N(R)$. Then there exists $s$ such that $(b_{l-1}a_{0})^{s} = 0$. Hence, we have
\begin{equation*}
((a_{1}b_{l-1})(a_{0}rb_{l})^{q+1}a_{1})(b_{l-1}a_{0})^{s}(b_{l-1}(a_{0}rb_{l})^{q+1}) = 0.
\end{equation*}
This implies
\begin{equation*}
((a_{1}b_{l-1})(a_{0}rb_{l})^{q+1}a_{1})(b_{l-1}a_{0})(b_{l-1}a_{0})^{s-1}(b_{l-1}(a_{0}rb_{l})^{q+1}) = 0.
\end{equation*}
Since $rb_{l}(a_{0}rb_{l})^{q}a_{1} \in I$ and $I$ is semicommutative, then
\begin{equation*}
((a_{1}b_{l-1})(a_{0}rb_{l})^{q+1}a_{1})(b_{l-1}a_{0})(rb_{l}(a_{0}rb_{l})^{q}a_{1})(b_{l-1}a_{0})^{s-1}(b_{l-1}(a_{0}rb_{l})^{q+1}) = 0.
\end{equation*}
This implies,
\begin{equation*}
(((a_{1}b_{l-1})(a_{0}rb_{l})^{q+1})^{2}a_{1})(b_{l-1}a_{0})(b_{l-1}a_{0})^{s-2}(b_{l-1}(a_{0}rb_{l})^{q+1})= 0.
\end{equation*}
Again, by using semicommutativity of $I$,
\begin{equation*}
(((a_{1}b_{l-1})(a_{0}rb_{l})^{q+1})^{2}a_{1})(b_{l-1}a_{0})rb_{l}(a_{0}rb_{l})^{q}a_{1}(b_{l-1}a_{0})^{s-2}(b_{l-1}(a_{0}rb_{l})^{q+1})= 0.
\end{equation*}
Continuing this process, we get $((a_{1}b_{l-1})(a_{0}rb_{l})^{q+1})^{s+2} = 0$ and hence $((a_{1}b_{l-1})(a_{0}rb_{l})^{q+1})\\ \in N(I)$. \\
Similarly, we can show $(a_{i}b_{l-i})(a_{0}rb_{l})^{q+1} \in N(I)$, for $2 \leq i \leq l$. By equation (\ref{2}), we have
\begin{equation}\label{3}
a_{0}b_{l}+a_{1}b_{l-1}+a_{2}b_{l-2}+\cdots+a_{l}b_{0} = 0.
\end{equation}
Multiplying in equation (\ref{3}) by $(a_{0}rb_{l})^{q+1}$ from right, we get
\begin{equation*}
a_{0}b_{l}(a_{0}rb_{l})^{q+1}+a_{1}b_{l-1}(a_{0}rb_{l})^{q+1}+a_{2}b_{l-2}(a_{0}rb_{l})^{q+1}+\cdots+a_{l}b_{0}(a_{0}rb_{l})^{q+1} = 0.
\end{equation*}
Since $N(I)$ is an ideal and $(a_{i}b_{l-i})(a_{0}rb_{l})^{q+1} \in N(I)$, for $2 \leq i \leq l$, therefore $a_{0}b_{l}(a_{0}rb_{l})^{q+1}\\ \in N(I)$. Also $(a_{0}rb_{l})^{q} \in I$, so $(a_{0}rb_{l})^{q} a_{0}b_{l}(a_{0}rb_{l})^{q+1} \in N(I)$. This implies, $(b_{l}(a_{0}rb_{l})^{q+1})\\(a_{0}rb_{l})^{q} a_{0}) \in N(I)$. Since, $((a_{0}rb_{l})^{q}a_{0}r), (rb_{l}(a_{0}rb_{l})^{q}) \in I$,
therefore $((a_{0}rb_{l})^{q}a_{0}r)(b_{l}(a_{0}rb_{l})^{q+1})\\((a_{0}rb_{l})^{q} a_{0})(rb_{l}(a_{0}rb_{l})^{q}) \in N(I)$ and hence $(a_{0}rb_{l})^{4q+3} \in N(I)$. Hence, $(a_{0}rb_{l})\in N(R)$ and thus $a_{0}Rb_{l}\subseteq N(R)$.\\
Let $(a_{1}rb_{l-1})^{t} \in I$, therefore by above argument $(a_{i}b_{l-i})(a_{1}rb_{l-1})^{t+1} \in N(I)$, for $2 \leq i \leq l$. Since $a_{0}b_{l} \in N(R)$, then $(a_{0}b_{l})^{u} = 0$. Therefore, $(a_{1}rb_{l-1})^{t+1}(a_{0}b_{l})^{u}(a_{1}rb_{l-1})^{t+1} = 0$. This implies,
\begin{equation*}
(a_{1}rb_{l-1})^{t+1}(a_{0}b_{l})(a_{0}b_{l})^{u-1}(a_{1}rb_{l-1})^{t+1} = 0.
\end{equation*}
Again, $I$ is semicommutative, we have
\begin{equation*}
(a_{1}rb_{l-1})^{t+1}(a_{0}b_{l})(a_{1}rb_{l-1})^{t+1}(a_{0}b_{l})^{u-1}(a_{1}rb_{l-1})^{t+1} = 0.
\end{equation*}
\begin{equation*}
(a_{1}rb_{l-1})^{t+1}(a_{0}b_{l})(a_{1}rb_{l-1})^{t+1}(a_{0}b_{l})(a_{0}b_{l})^{u-2}(a_{1}rb_{l-1})^{t+1} = 0.
\end{equation*}
Continuing this process, we get $((a_{1}rb_{l-1})^{t+1}(a_{0}b_{l}))^{u}(a_{1}rb_{l-1})^{t+1} = 0$ and so $((a_{1}rb_{l-1})^{t+1}\\(a_{0}b_{l}))^{u+1} = 0$, therefore $((a_{0}b_{l})(a_{1}rb_{l-1})^{t+1})^{u+2} = 0$. Thus, $(a_{0}b_{l})(a_{1}rb_{l-1})^{t+1} \in N(I)$.\\
Multiplying in equation (\ref{3}) by $(a_{1}rb_{l-1})^{t+1}$ from right side, we get $(a_{1}b_{l-1})(a_{1}rb_{l-1})^{t+1} \in N(I)$. Again, by above analogy, we get $(a_{1}rb_{l-1})^{4t+3} \in N(I)$ and hence $(a_{1}rb_{l-1}) \in N(R)$. Thus, $a_{1}Rb_{l-1} \subseteq N(R)$. Similarly, we can show that $a_{2}Rb_{l-2}, a_{3}Rb_{l-3}, \ldots, a_{l}Rb_{0} \in N(R)$. Therefore, by induction we have $a_{i}Rb_{j} \subseteq N(R)$ for each $i, j$.
\end{proof}
\begin{pro} If $I$ is a nilpotent ideal of a ring $R$ and $R/I$ is strongly nil-IFP, then $R$ is a strongly nil-IFP.
\end{pro}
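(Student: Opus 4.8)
The plan is to reduce the problem modulo $I$ and then exploit the nilpotency of $I$ to promote ``nilpotent modulo $I$'' to genuine nilpotency in $R$. This is precisely what makes the present case far shorter than the preceding proposition: here $I$ is assumed \emph{nilpotent} rather than merely semicommutative, so the degree induction carried out earlier is entirely avoidable.

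First I would take $f(x) = \sum_{i=0}^{m} a_{i}x^{i}$ and $g(x) = \sum_{j=0}^{n} b_{j}x^{j}$ in $R[x]$ with $f(x)g(x) = 0$, and pass to the quotient, obtaining $\overline{f(x)}\,\overline{g(x)} = \overline{0}$ in $(R/I)[x]$, where the bar denotes the image under $R \to R/I$. Since $R/I$ is strongly nil-IFP, applying the definition to the coefficients $\overline{a_{i}} \in C_{\overline{f}}$ and $\overline{b_{j}} \in C_{\overline{g}}$ yields $\overline{a_{i}}\,\overline{r}\,\overline{b_{j}} \in N(R/I)$ for every $r \in R$ and all $i,j$. (If some $\overline{a_{i}}$ or $\overline{b_{j}}$ vanishes the corresponding relation holds trivially, since then $a_{i}rb_{j} \in I$ already; so the conclusion is uniform in $i,j$.) Hence for each $i,j$ and each $r \in R$ there is a positive integer $p = p(i,j,r)$ with $(a_{i}rb_{j})^{p} \in I$.

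The key step is then to invoke nilpotency: fix $k$ with $I^{k} = 0$. From $(a_{i}rb_{j})^{p} \in I$ we obtain $(a_{i}rb_{j})^{pk} = \bigl((a_{i}rb_{j})^{p}\bigr)^{k} \in I^{k} = 0$, so $a_{i}rb_{j} \in N(R)$. As $r \in R$ was arbitrary, this gives $a_{i}Rb_{j} \subseteq N(R)$ for all $i,j$, which is exactly the defining condition for $R$ to be strongly nil-IFP.

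I do not expect a genuine obstacle here; the only point requiring care is that the \emph{uniform} nilpotency bound $k$ for $I$ is what allows the pointwise exponents $p(i,j,r)$ to be absorbed, upgrading mere membership in the ideal $I$ to nilpotency in $R$. This is precisely the ingredient missing in the semicommutative-ideal version of the previous proposition, where $I$ need not be nil and one is therefore forced into the full induction on $i+j$.
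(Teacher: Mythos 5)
Your proposal is correct and follows essentially the same route as the paper: pass to $(R/I)[x]$, use the strongly nil-IFP hypothesis to get $(a_irb_j)^{p}\in I$, and then use $I^{k}=0$ to conclude $(a_irb_j)^{pk}=0$. The paper's own proof is exactly this two-step argument, stated more briefly.
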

\begin{proof} Let $f(x)$ and $g(x)\in R[x]$ such that $f(x)g(x) = 0$. Then $\overline{f(x)}\overline{g(x)} = \overline{0}$, therefore $(arb)^{n} \in I$ for each $a \in C_{f(x)}, b \in C_{g(x)}$ and for all $r \in R$, since $R/I$ is strongly nil-IFP. This implies $((arb)^{n})^{m} = 0$, because $I$ is nilpotent ideal with nilpotency $m$. Thus, $R$ is a strongly nil-IFP.
\end{proof}

\begin{thm} Let $R$ be a ring and $n$ be a positive integer. If $R$ is 2-primal, then $R[x]/<x^{n}>$ is a strongly nil-IFP.
\end{thm}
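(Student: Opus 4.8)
The plan is to reduce the statement to the already-proved fact that every $2$-primal ring is strongly nil-IFP (Proposition \ref{pro2}$(1)$); it therefore suffices to show that the ring $S = R[x]/<x^{n}>$ is itself $2$-primal. Write $\bar{x}$ for the image of $x$ in $S$, so that $\bar{x}$ is central and $\bar{x}^{n} = 0$, and let $D = (\bar{x})$ be the two-sided ideal of $S$ generated by $\bar{x}$. First I would observe that $D^{n} \subseteq \bar{x}^{n}S = 0$, so $D$ is a nilpotent, hence nil, ideal, and consequently $D \subseteq N_{*}(S)$. Next I would identify $S/D$ with $R$ via the canonical projection $\pi : S \to S/D \cong R$ sending $a_{0}+a_{1}\bar{x}+\cdots+a_{n-1}\bar{x}^{n-1}$ to $a_{0}$.

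The key step is to compute $N_{*}(S)$. Since $D \subseteq N_{*}(S)$, every prime ideal of $S$ contains $D$, so the prime ideals of $S$ correspond exactly to the prime ideals of $S/D \cong R$; intersecting them gives $N_{*}(S)/D = N_{*}(S/D) = N_{*}(R)$, whence $N_{*}(S) = \pi^{-1}(N_{*}(R))$. To finish the $2$-primality of $S$ I would verify $N(S) = N_{*}(S)$. The inclusion $N_{*}(S) \subseteq N(S)$ is automatic, since the prime radical is nil. For the reverse inclusion, take any nilpotent $s \in S$; applying $\pi$ shows that $\pi(s) = a_{0}$ is nilpotent in $R$, so $a_{0} \in N(R) = N_{*}(R)$ by the $2$-primality of $R$, and therefore $s \in \pi^{-1}(N_{*}(R)) = N_{*}(S)$. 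Thus $N(S) \subseteq N_{*}(S)$, and combining the two inclusions yields $N_{*}(S) = N(S)$, i.e. $S$ is $2$-primal.

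With $S$ shown to be $2$-primal, Proposition \ref{pro2}$(1)$ immediately gives that $S = R[x]/<x^{n}>$ is strongly nil-IFP, completing the proof. The main obstacle is the middle step, namely pinning down $N_{*}(S)$ exactly: one must justify that passing to the quotient by the nilpotent ideal $D$ does not enlarge the prime radical beyond the preimage of $N_{*}(R)$, which is precisely where the prime-ideal correspondence and the hypothesis $N(R) = N_{*}(R)$ are used. An alternative, more computational route would work directly with $f(t), g(t) \in S[t]$ satisfying $f(t)g(t) = 0$, tracking coefficients modulo the nilpotent ideal $D$ together with Proposition \ref{pro7}; but the reduction to $2$-primality above is cleaner and avoids the bookkeeping.
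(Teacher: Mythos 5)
Your proposal is correct, and it takes a genuinely different route from the paper. You reduce everything to showing that $S = R[x]/<x^{n}>$ is itself $2$-primal and then invoke Proposition \ref{pro2}$(1)$; the key computation $N_{*}(S) = \pi^{-1}(N_{*}(R))$ is justified correctly, since the ideal $D = (\bar{x})$ satisfies $D^{n}=0$, hence lies in every prime ideal of $S$, so the primes of $S$ are exactly the preimages of the primes of $S/D \cong R$, and the reverse inclusion $N(S)\subseteq N_{*}(S)$ follows because a nilpotent element of $S$ has nilpotent constant term, which lands in $N(R)=N_{*}(R)$. The paper instead argues directly on coefficients: it writes $f(t),g(t)\in R[w][t]$ (with $w=\bar{x}$) in $w$-graded pieces $f_{i}(t), g_{j}(t)$, notes that the components with $i+j\geq n$ vanish automatically, and for $i+j<n$ uses that $R[t]$ is $2$-primal (hence strongly nil-IFP) together with Proposition \ref{pro7} to conclude $a_{p}^{(i)}Rb_{q}^{(j)}\subseteq N(R)$, in the spirit of Anderson--Camillo's Theorem 5. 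Your approach is shorter and proves the stronger intermediate fact that $S$ is $2$-primal (a known result of Birkenmeier--Heatherly--Lee type), from which the theorem is immediate; the paper's approach avoids having to identify $N_{*}(S)$ at all and stays within the coefficient-chasing framework it uses for its other polynomial results, at the cost of relying on the nontrivial fact that $2$-primality passes to $R[t]$.
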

\begin{proof} Result is obvious for $n=1$ by Proposition \ref{pro2}. For $n \geq 2$, we use the technique of the proof of [\cite{D}, Theorem 5]. Here, we denote $\overline{x}$ by $w$ and hence $R[x]/<x^{n}>~ = ~R[w]  = R+Rw+Rw^{2}+\cdots+Rw^{n-1}$, where $w$ commutes with the elements of $R$ and $w^{n} = 0$. Let $f(t) = a_{0}(w)+a_{1}(w)t+a_{2}(w)t^{2}+\cdots+a_{r}(w)t^{r},$ $g(t) = b_{0}(w)+b_{1}(w)t+b_{2}(w)t^{2}+\cdots+b_{s}(w)t^{s} \in R[w][t]$ with $f(t)g(t) = 0$. In order to prove $a_{i}(w)R[w]b_{j}(w) \in N(R[w])$, we can write $a_{i}(w) = a_{i}^{(0)}+a_{i}^{(1)}w+a_{i}^{(2)}w^{2}+\cdots+a_{i}^{(n-1)}w^{n-1}$ and $b_{j}(w) = b_{j}^{(0)}+b_{j}^{(1)}w+b_{j}^{(2)}w^{2}+\cdots+b_{j}^{(n-1)}w^{n-1}$. On the other hand, we can write $f(t) = f_{0}(t)+f_{1}(t)w+f_{2}(t)w^{2}+\cdots+f_{n-1}(t)w^{n-1}$ and $g(t) = g_{0}(t)+g_{1}(t)w+g_{2}(t)w^{2}+\cdots+g_{n-1}(t)w^{n-1}$, where $f_{i}(t) = a_{o}^{(i)}+a_{1}^{(i)}t+a_{2}^{(i)}t^{2}+\cdots+a_{r}^{(i)}t^{r}$ and $g_{j}(t) = b_{o}^{(j)}+b_{1}^{(j)}t+b_{2}^{(j)}t^{2}+\cdots+b_{s}^{(j)}t^{s}$. Now, it is sufficient to proof that $a_{p}^{(i)}w^{i}R[w]b_{q}^{(j)}w^{j} \in N(R[w])$ for all $i, j, p, q$. From $f(t)g(t) = 0$, we have $f_{i}(t)g_{j}(t) = 0$ for $i+j = n$ and this implies $a_{p}^{(i)}w^{i}R[w]b_{q}^{(j)}w^{j} = 0 \in N(R[w])$. If $i+j<n$, then $f_{0}(t)g_{0}(t) = 0$, $f_{0}(t)g_{1}(t)+f_{1}(t)g_{0}(t) = 0$, \ldots, $f_{0}(t)g_{n-1}(t)+f_{1}(t)g_{n-2}(t)+f_{2}(t)g_{n-3}(t)+\cdots+f_{n-1}(t)g_{0}(t) = 0$. Since $R$ is 2-primal, therefore $R[t]$ is 2-primal and hence $R[t]$ is strongly nil-IFP. Therefore, $f_{i}(t)R[t]g_{j}(t) \in N(R[t]) = N(R)[t]$. By Proposition \ref{pro7}, we have $a_{p}^{(i)}Rb_{q}^{(j)} \in N(R)$. Thus, $a_{p}^{(i)}w^{i}R[w] b_{q}^{(j)}w^{j} \in N(R[w])$.
\end{proof}

\begin{pro}\label{pro3} For a ring $R$ and $n \geq 2$, the following are equivalent:
\begin{itemize}
\item[$(1)$] $R$ is strongly nil-IFP.
\item[$(2)$] $U_{n}(R)$ is strongly nil-IFP.
\item[$(3)$] $D_{n}(R)$ is strongly nil-IFP.
\item[$(4)$] $V_{n}(R)$ is strongly nil-IFP.
\item[$(5)$]  $T(R, R)$ is strongly nil-IFP.
\end{itemize}
\end{pro}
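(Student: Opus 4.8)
The plan is to isolate the single substantive implication $(1)\Rightarrow(2)$ and then obtain all the others for free from the fact that the strongly nil-IFP class is closed under subrings, which is Proposition \ref{pro9}$(1)$. The key structural observations I would use are the subring chain $V_{n}(R)\subseteq D_{n}(R)\subseteq U_{n}(R)$, the embedding of $T(R,R)\cong D_{2}(R)$ into $D_{n}(R)$ as $\{aI+be_{12} : a,b\in R\}$, and the diagonal embedding of $R$ as the scalar matrices $\{aI : a\in R\}$, which lands inside $V_{n}(R)$ and inside $T(R,R)$.

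First I would prove $(1)\Rightarrow(2)$. Let $N_{n}(R)$ denote the set of strictly upper triangular matrices in $U_{n}(R)$. I would check that $N_{n}(R)$ is a two-sided ideal of $U_{n}(R)$ (a product with any upper triangular matrix stays strictly upper triangular on either side) and that $N_{n}(R)^{n}=0$, so $N_{n}(R)$ is a nilpotent ideal. Sending a matrix to its diagonal gives a surjective ring homomorphism $U_{n}(R)\to R\times\cdots\times R$ with kernel exactly $N_{n}(R)$, whence
\[
U_{n}(R)/N_{n}(R)\;\cong\;\underbrace{R\times R\times\cdots\times R}_{n}.
\]
Since $R$ is strongly nil-IFP by hypothesis, Proposition \ref{pro9}$(2)$ (closure under direct sums) shows the right-hand side is strongly nil-IFP, hence so is $U_{n}(R)/N_{n}(R)$. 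Applying the preceding Proposition on nilpotent ideals (if $I$ is a nilpotent ideal and $R/I$ is strongly nil-IFP, then $R$ is strongly nil-IFP) with $I=N_{n}(R)$ then gives that $U_{n}(R)$ is strongly nil-IFP.

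For the remaining implications I would only invoke subring closure. From $V_{n}(R)\subseteq D_{n}(R)\subseteq U_{n}(R)$ and Proposition \ref{pro9}$(1)$, the chain $(2)\Rightarrow(3)\Rightarrow(4)$ is immediate. Identifying $R$ with the scalar matrices inside $V_{n}(R)$ yields $(4)\Rightarrow(1)$, closing the cycle $(1)\Rightarrow(2)\Rightarrow(3)\Rightarrow(4)\Rightarrow(1)$. For the trivial extension, the subring $\{aI+be_{12}\}$ of $D_{n}(R)$ is isomorphic to $T(R,R)$, giving $(3)\Rightarrow(5)$, while $a\mapsto(a,0)$ embeds $R$ into $T(R,R)$, giving $(5)\Rightarrow(1)$; alternatively $(1)\Leftrightarrow(5)$ is just $(1)\Rightarrow(2)$ for $n=2$ together with subring closure, since $T(R,R)\cong D_{2}(R)\subseteq U_{2}(R)$.

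The hard part will be entirely contained in $(1)\Rightarrow(2)$, and even there the difficulty is only the bookkeeping needed to confirm that $N_{n}(R)$ is genuinely a nilpotent two-sided ideal and that the diagonal quotient is precisely the $n$-fold direct product of $R$. Once these two facts are in place, the two earlier propositions (direct-sum closure and nilpotent-ideal lifting) do all the real work, so no direct manipulation of polynomials over the matrix rings is required, and the rest of the equivalences reduce to routine appeals to subring closure.
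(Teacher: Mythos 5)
Your proof is correct, but it reaches the one substantive implication $(1)\Rightarrow(2)$ by a genuinely different route than the paper. The paper argues directly with polynomials over $U_{n}(R)$: from $f(x)g(x)=0$ it reads off the $n$ diagonal polynomial identities $(\sum_{i}a_{pp}^{i}x^{i})(\sum_{j}b_{pp}^{j}x^{j})=0$ in $R[x]$, uses the hypothesis on $R$ to make every diagonal entry of $A_{i}CB_{j}$ nilpotent, takes a common power $m_{ij}$ to kill the whole diagonal at once, and then observes that the resulting strictly upper triangular matrix dies at the $n$-th power. You instead package exactly this diagonal reduction into previously established machinery: the strictly upper triangular matrices form a nilpotent ideal $N_{n}(R)$ with $U_{n}(R)/N_{n}(R)\cong R\times\cdots\times R$, the finite product is strongly nil-IFP by Proposition \ref{pro9}$(2)$ (for a finite index set the direct sum is the direct product, so that proposition applies), and the nilpotent-ideal lifting result stated just before this proposition then hands you $U_{n}(R)$. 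Both arguments are sound and rest on the same underlying observation that nilpotency in $U_{n}(R)$ is detected on the diagonal; yours trades the paper's explicit bookkeeping with the exponents $m_{ij}=m_{ij1}\cdots m_{ijn}$ for two clean appeals to general lemmas, which is arguably tidier and makes the logical dependencies explicit. Your treatment of the remaining implications via the subring chain $V_{n}(R)\subseteq D_{n}(R)\subseteq U_{n}(R)$, the scalar embedding of $R$, and the identification $T(R,R)\cong\{aI+be_{12}\}$ is exactly what the paper leaves implicit behind the phrase ``it is sufficient to show $(1)\Rightarrow(2)$,'' and you supply those details correctly.
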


\begin{proof} It is sufficient to show $(1)\Rightarrow(2)$. Let $f(x) = A_{0}+A_{1}x+A_{2}x^{2}+\cdots+A_{r}x^{r}$ and $g(x) = B_{0}+B_{1}x+B_{2}x^{2}+\cdots+B_{s}x^{s} \in U_{n}(R)[x]$ such that $f(x)g(x) = 0$, where $A_{i}'s, B{j}'s$ are $n\times n$ upper triangular matrices over $R$ as below:\\

$A_{i} = \left(
           \begin{array}{cccc}
             a_{11}^{i} & a_{12}^{i} & \ldots & a_{1n}^{i} \\
             0 & a_{22}^{i} & \ldots & a_{nn}^{i} \\
             \vdots & \vdots & \ddots & \vdots \\
             0 & 0 & \ldots & a_{nn}^{i} \\
           \end{array}
         \right)$,~~~~~
         $B_{j} = \left(
           \begin{array}{cccc}
             b_{11}^{j} & b_{12}^{j} & \ldots & b_{1n}^{j} \\
             0 & b_{22}^{j} & \ldots & b_{nn}^{i} \\
             \vdots & \vdots & \ddots & \vdots \\
             0 & 0 & \ldots & b_{nn}^{j} \\
           \end{array}
         \right) \in U_{n}(R)$.\\\\
Then, from $f(x)g(x) = 0$, we have $(\sum _{i=0}^{r}a_{pp}^{i}x^{i})$ $(\sum_{j=0}^{s}b_{pp}^{j}x^{j}) = 0\in R[x]$, for $p = 1, 2\ldots n$. Since $R$ is strongly nil-IFP, therefore $a_{pp}^{i}rb_{pp}^{j} \in N(R)$, for each $p$, for each $i$, $j$ and for all $r \in R$. This implies $(a_{pp}^{i}rb_{pp}^{j})^{m_{ijp}} = 0$ for each $p$ and $i$, $j$ and for all $r \in R$. Let $m_{ij} = m_{ij1} m_{ij2}...m_{ijn}$. Therefore,
        $(A_{i}CB_{j})^{m_{ij}} = \left(
                                   \begin{array}{cccc}
                                     0 & \ast & \ldots & \ast \\
                                     0 & 0 & \ldots & \ast \\
                                     \vdots & \vdots & \ddots & \vdots \\
                                     0 & 0 & \ldots & 0 \\
                                   \end{array}
                                 \right)$ and $((A_{i}CB_{j})^{m_{ij}})^{n} = 0$, for each $C \in U_{n}(R)$ and for each $i, j$. Hence, $A_{i}CB_{j} \in N(U_{n}(R))$ for each $i, j$, where $0 \leq i \leq s$, $0 \leq j \leq t$ and for all $C \in U_{n}(R)$. Thus, $U_{n}(R)$ is strongly nil-IFP.
\end{proof}

\begin{pro}\label{pro6} Let $R$  be a strongly nil-IFP. If $N(R)[x] = N(R[x])$, then $R[x]$ is strongly nil-IFP.
\end{pro}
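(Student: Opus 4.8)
The plan is to reduce the two-variable problem for $R[x]$ to the one-variable hypothesis for $R$ by the standard substitution trick, and then to feed the resulting coefficientwise nilpotency back through the assumption $N(R)[x]=N(R[x])$. Concretely, I would take $F(y)=\sum_{i=0}^{r}p_{i}(x)y^{i}$ and $G(y)=\sum_{j=0}^{s}q_{j}(x)y^{j}$ in $(R[x])[y]$ with $F(y)G(y)=0$, and fix an integer $k$ strictly larger than the $x$-degree of every $p_{i}$ and every $q_{j}$. Substituting $y=x^{k}$ is a ring homomorphism $(R[x])[y]\to R[x]$ (here $x$ and $y$ are central), so $f(x):=F(x^{k})$ and $g(x):=G(x^{k})$ satisfy $f(x)g(x)=0$ in $R[x]$. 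Because $k$ exceeds all relevant degrees, the blocks $p_{i}(x)x^{ki}$ occupy pairwise disjoint ranges of exponents, so no overlap or cancellation occurs and $C_{f}=\bigcup_{i}C_{p_{i}}$ while $C_{g}=\bigcup_{j}C_{q_{j}}$.

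Next, since $R$ is strongly nil-IFP and $f(x)g(x)=0$, I obtain $a\,r\,b\in N(R)$ for every $a\in C_{f}$, every $b\in C_{g}$ and every $r\in R$; by the previous step this says precisely that $a\,r\,b\in N(R)$ whenever $a$ is a coefficient of some $p_{i}$, $b$ is a coefficient of some $q_{j}$, and $r\in R$. It then remains to upgrade this coefficient-level statement to $p_{i}(x)h(x)q_{j}(x)\in N(R[x])$ for each fixed $i,j$ and each $h(x)=\sum_{s}r_{s}x^{s}\in R[x]$, which is exactly what the strongly nil-IFP condition for $R[x]$ requires.

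For this last step the hypothesis $N(R)[x]=N(R[x])$ is the essential tool, and it is where I expect the only genuine subtlety to lie. Writing $p_{i}(x)=\sum_{l}a_{l}x^{l}$ and $q_{j}(x)=\sum_{m}b_{m}x^{m}$, the coefficient of $x^{t}$ in $p_{i}(x)h(x)q_{j}(x)$ is the finite sum $\sum_{l+s+m=t}a_{l}r_{s}b_{m}$, each summand of which lies in $N(R)$. The obstacle is that $N(R)$ need not be closed under addition in general, so a sum of nilpotent elements need not be nilpotent. The assumption removes exactly this obstacle: given $u,v\in N(R)$, the polynomial $u+vx$ has nilpotent coefficients, hence $u+vx\in N(R)[x]=N(R[x])$ is nilpotent, and applying the evaluation homomorphism $x\mapsto 1$ (which preserves nilpotency) gives $u+v\in N(R)$; thus $N(R)$ is additively closed. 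Consequently each coefficient of $p_{i}(x)h(x)q_{j}(x)$ lies in $N(R)$, so $p_{i}(x)h(x)q_{j}(x)\in N(R)[x]=N(R[x])$, and $R[x]$ is strongly nil-IFP. The main difficulty is thus entirely concentrated in converting the coefficientwise nilpotency into nilpotency of the assembled coefficients, and it is resolved by the additive closure of $N(R)$ extracted from $N(R)[x]=N(R[x])$.
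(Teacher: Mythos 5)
Your proposal is correct and follows the same core strategy as the paper: substitute $y=x^{k}$ for $k$ exceeding the relevant $x$-degrees, so that $F(y)G(y)=0$ collapses to a single equation $f(x)g(x)=0$ in $R[x]$ whose coefficient sets are exactly $\bigcup_i C_{p_i}$ and $\bigcup_j C_{q_j}$, and then apply the strongly nil-IFP hypothesis on $R$. Where you go beyond the paper is in the final step: the paper passes directly from $a_{ic}Rb_{jd}\subseteq N(R)$ to $f_i(x)Rg_j(x)\subseteq N(R)[x]$ without comment, which silently assumes that the sums $\sum_{l+s+m=t}a_l r_s b_m$ arising as coefficients again lie in $N(R)$, and it only treats constant middle factors $r\in R$ rather than arbitrary $h(x)\in R[x]$. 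You address both points explicitly, extracting additive closure of $N(R)$ from the hypothesis via the polynomial $u+vx\in N(R)[x]=N(R[x])$ and the evaluation homomorphism $x\mapsto 1$, and then concluding $p_i(x)h(x)q_j(x)\in N(R)[x]=N(R[x])$ for every $h(x)\in R[x]$, which is what the definition actually demands of $R[x]$. So your write-up is not a different route but a tighter version of the paper's argument that fills in the two places where the published proof is terse.
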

\begin{proof}
 Let $p(y) = f_{0}(x) + f_{1}(x)y + \cdots + f_{m}(x)y^{m}$, $q(y) = g_{0} + g_{1}(x)y +\cdots +g_{n}y^{n} \in R[x][y]$ such that $p(y)q(y) = 0$, where $f_{i}(x), g_{j}(x) \in R[x]$. Write $f_{i}(x) = a_{i0} + a_{i1}x + \cdots + a_{iu_{i}}x^{u_{i}}$, $g_{j}(x) = b_{j0} + b_{j1}x + \cdots + b_{jv_{j}}x^{v_{j}}$, for each $0 \leq i \leq m$ and $0 \leq j \leq n$, where $a_{i0}, a_{i1}, \ldots, a_{iu_{i}}, b_{j0}, b_{j1}, \ldots, b_{jv_{j}} \in R$. Choose a positive integer $k$ such that $k > deg(f_{0}(x)) + deg(f_{1}(x)) + \cdots + deg(f_{m}(x)) + deg(g_{0}(x)) + deg (g_{1}(x)) + \cdots + deg(g_{n}(x))$. Since $p(y)q(y) = 0 \in R[x][y]$, we get

$$\left \{
\begin{array}{ll}
f_{0}(x)g_{0}(x) = 0\\
f_{0}(x)g_{1}(x) + f_{1}(x)g_{0}(x) = 0\\
\ldots\ldots\ldots\ldots\; \; \; \; \;\; \; \; \; \;\; \; \; \; \; \; \; \; \; \; \; \; \; \; \; \; \; \; \; \; \; \; \; \; \; \; \; \; \; \; \; \; \; \; \;\; \; \; \; \;\; \; \; \; \; \\
\ldots\ldots\ldots\ldots\; \; \; \; \;\; \; \; \; \;\; \; \; \; \; \; \; \; \; \; \; \; \; \; \; \; \; \; \; \; \; \; \; \; \; \; \; \; \; \; \; \; \; \; \;\; \; \; \; \;\; \; \; \; \; \\
\ldots\ldots\ldots\ldots\\
f_{m}(x)g_{n}(x) = 0
\end{array}\right.$$
Now, put
\begin{eqnarray*}
p(x^{k})&=&f(x) = f_{0}(x) + f_{1}(x)x^{k} + f_{2}x^{2k} + \cdots + f_{m}(x)x^{mk};\\% *
q(x^{k})&=&g(x) = g_{0}(x) + g_{1}(x)x^{k} + g_{2}x^{2k} + \cdots + g_{n}x^{nk}. \; \; \; \; \; \; \; \; \; \; \; \; \; \; \; \;(\ast\ast)
\end{eqnarray*}
Then, we have
$$f(x)g(x) = f_{0}(x)g_{0}(x) + (f_{0}(x)g_{1}(x) + f_{1}(x)g_{0}(x))x^{k} + \cdots f_{m}(x)g_{n}(x)x^{(n + k)}.$$
Therefore, by $(\ast\ast)$, we have $f(x)g(x) = 0$ in $R[x]$. On the other hand, we have $f(x)g(x) = (a_{00} + a_{01}x + \cdots +a_{0u_{0}}x^{u_{0}} + a_{10}x^{k} + a_{11}x^{k + 1} + \cdots + a_{1u_{1}}x^{k + u_{1}} + \cdots + a_{m0} + a_{m1}x^{mk + 1} + \cdots + a_{mu_{m}}x^{mk + u_{m}})(b_{00} + b_{01}x + b_{0v_{0}}x^{v_{0}} + b_{10}x^{k} + b_{11}x^{k + 1} + \cdots + b_{1v_{1}}x^{k + v_{1}} + \cdots + b_{n0}x^{nk} + b_{n1}x^{nk + 1} + \cdots + b_{nv_{n}}x^{nk + v_{n}}) = 0$ in $R[x]$. Since $R$ is strongly nil-IFP, so $a_{ic}Rb_{jd} \in N(R),$ for all $0 \leq i \leq m$, $0 \leq j \leq n$, $c \in \{0, 1, \ldots, u_{i}\}$ and $d \in \{0, 1, \ldots, v_{j}\}.$ Therefore, $f_{i}(x)Rg_{j}(x) \in N(R)[x] = N(R[x])$ for all $0 \leq i \leq m$ and $0 \leq j \leq n$. Hence, $R[x]$ is a strongly nil-IFP.
\end{proof}
\begin{cor} Let $R$ be a ring. If $N(R)[x] = N(R[x])$, then the following conditions are equivalent:
\begin{itemize}
\item[$(1)$] $R$ is strongly nil-IFP.
\item[$(2)$] $R[x]$ is strongly nil-IFP.
\item[$(3)$] $R[x, x^{-1}]$ is strongly nil-IFP.
\end{itemize}
\end{cor}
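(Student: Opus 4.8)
The plan is to prove the chain of equivalences by splitting it into the block $(1)\Leftrightarrow(2)$ and the block $(2)\Leftrightarrow(3)$, using subring-closure of the class (Proposition \ref{pro9}(1)) for the easy directions and the transfer results already established for the hard ones. For $(1)\Rightarrow(2)$ I would simply invoke Proposition \ref{pro6}, which is precisely the assertion that a strongly nil-IFP ring $R$ satisfying $N(R)[x]=N(R[x])$ has a strongly nil-IFP polynomial ring $R[x]$; thus the hypothesis of the corollary is consumed here and nowhere else. For $(2)\Rightarrow(1)$, since $R$ embeds in $R[x]$ as a subring, Proposition \ref{pro9}(1) immediately yields that $R$ is strongly nil-IFP once $R[x]$ is.

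For $(2)\Leftrightarrow(3)$ I would localize at the central multiplicative set $S=\{1,x,x^{2},\ldots\}$, exactly as in the corollary following Proposition \ref{pro8}, using $R[x,x^{-1}]=S^{-1}R[x]$. The direction $(3)\Rightarrow(2)$ is immediate from Proposition \ref{pro9}(1), since $R[x]$ is a subring of $R[x,x^{-1}]$. For $(2)\Rightarrow(3)$ the key preliminary observation is that for a central regular $S$ one has $N(S^{-1}T)=S^{-1}N(T)$ for any ring $T$: if $(u^{-1}a)^{k}=u^{-k}a^{k}=0$ then $a^{k}=0$ because $u^{-k}$ is a unit, and the converse is clear. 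Taking $T=R[x]$, I would begin from $F(y)G(y)=0$ in $(S^{-1}R[x])[y]$, clear denominators by writing $F(y)=u^{-1}f(y)$ and $G(y)=v^{-1}g(y)$ with $f,g\in R[x][y]$ and $u,v\in S$ (possible because $S$ is central), and deduce $f(y)g(y)=0$ in $R[x][y]$ since $u^{-1}v^{-1}$ is a unit. Strong nil-IFP of $R[x]$ then gives $atb\in N(R[x])$ for all coefficients $a$ of $f$, $b$ of $g$, and all $t\in R[x]$; multiplying by the central units arising from the denominators and applying $N(S^{-1}R[x])=S^{-1}N(R[x])$ produces $AcB\in N(S^{-1}R[x])$ for every coefficient $A$ of $F$, every coefficient $B$ of $G$, and every $c\in S^{-1}R[x]$, which is exactly strong nil-IFP of $R[x,x^{-1}]$.

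The main obstacle is the denominator-clearing step in $(2)\Rightarrow(3)$: I must check that reducing the coefficients of $F$ and $G$ to a common denominator and commuting the central units past the coefficients is legitimate, and that the nilpotency conclusion transfers back through the identification $N(S^{-1}R[x])=S^{-1}N(R[x])$. Centrality of $S$ is what makes all of this work, since it lets each $u^{-1}$ commute with the coefficients and guarantees that a central unit times a nilpotent element is again nilpotent; without centrality the argument would break. Everything else is routine bookkeeping, and I emphasize that the hypothesis $N(R)[x]=N(R[x])$ enters only to feed Proposition \ref{pro6} in the direction $(1)\Rightarrow(2)$.
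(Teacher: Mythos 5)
Your proposal is correct and follows exactly the route the paper intends: Proposition \ref{pro6} for $(1)\Rightarrow(2)$, subring closure (Proposition \ref{pro9}) for the downward implications, and the identification $R[x,x^{-1}]=S^{-1}R[x]$ with $S=\{1,x,x^{2},\ldots\}$ together with the localization result for strongly nil-IFP rings for $(2)\Leftrightarrow(3)$, mirroring the earlier corollary for weak ideal-Armendariz rings. Your explicit denominator-clearing verification and the observation that the hypothesis $N(R)[x]=N(R[x])$ is used only in $(1)\Rightarrow(2)$ are both accurate.
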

\begin{pro} If $R$ is an Armendariz and $QRPR$, then $R[x]$ is strongly nil-IFP.
\end{pro}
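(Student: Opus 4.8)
The plan is to reduce the statement to the two facts already available for strongly nil-IFP rings, namely Proposition \ref{pro5} and Proposition \ref{pro6}. By Proposition \ref{pro5}, the hypotheses (Armendariz and $QRPR$) make $R$ itself strongly nil-IFP, so the only thing left to produce is the hypothesis $N(R)[x] = N(R[x])$ demanded by Proposition \ref{pro6}; once that equality is in hand, Proposition \ref{pro6} applies verbatim and delivers that $R[x]$ is strongly nil-IFP. So the entire burden of the proof is the verification of this set equality.

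First I would record that $R$ is strongly nil-IFP by Proposition \ref{pro5}, and hence weakly semicommutative (every strongly nil-IFP ring is weakly semicommutative). Since $R$ is also Armendariz by assumption, $R$ is in fact a weak ideal-Armendariz ring, which places us in exactly the setting of Proposition 2.1. In particular the forward inclusion $N(R)[x] \subseteq N(R[x])$ follows from Lemma (2.6) of \cite{RR}, precisely as it was used there.

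The substantive step is the reverse inclusion $N(R[x]) \subseteq N(R)[x]$, i.e. showing that every nilpotent polynomial over $R$ has all of its coefficients nilpotent. Here I would invoke the Armendariz hypothesis through its nil-Armendariz consequence: an Armendariz ring is nil-Armendariz, and for such rings the containment $N(R[x]) \subseteq N(R)[x]$ holds (\cite{RR}); combining with the previous paragraph gives the equality $N(R)[x] = N(R[x])$. As a robustness check one can also reach the same inclusion through Proposition \ref{pro1}, since a strongly nil-IFP ring is weak Armendariz, and the weak Armendariz condition already controls the nilpotency of coefficients (\cite{Z}). I expect this reverse inclusion to be the main obstacle, because it is exactly the point where mere nilpotency of polynomials must be upgraded to a coefficient-wise statement; the Armendariz/nil-Armendariz machinery is what governs the interaction of the nilpotent coefficients, and without it a nilpotent polynomial could in principle carry a non-nilpotent coefficient.

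With $N(R)[x] = N(R[x])$ established and $R$ strongly nil-IFP, Proposition \ref{pro6} applies and we conclude that $R[x]$ is strongly nil-IFP. It is worth noting that the $QRPR$ hypothesis is used only inside Proposition \ref{pro5} (it is what promotes Armendariz to strongly nil-IFP), whereas the equality of the nil sets is driven purely by the Armendariz condition. This is consistent with the earlier observation that Armendariz alone is not enough: the ring $K[a,b]/\langle a^{2}\rangle$ of Example \ref{ex1} is Armendariz yet fails to be strongly nil-IFP, so $QRPR$ is genuinely needed to force the strongly nil-IFP conclusion for $R[x]$.
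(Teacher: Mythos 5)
Your proposal is correct and follows exactly the paper's route: Proposition \ref{pro5} makes $R$ strongly nil-IFP, the Armendariz hypothesis yields $N(R)[x]=N(R[x])$ (the paper simply asserts this; you justify it via the nil-Armendariz results of \cite{RR}), and Proposition \ref{pro6} then finishes. The only caveat is your parenthetical ``robustness check'': the weak Armendariz condition of Proposition \ref{pro1} does not by itself give $N(R[x])\subseteq N(R)[x]$, but since that remark is inessential to your main argument, the proof stands as written.
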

\begin{proof}
By Proposition (\ref{pro5}), $R$ is a strongly nil-IFP. In Armendariz ring, $N(R)[x] = N(R[x])$. It follows by Proposition \ref{pro6}, $R[x]$ is strongly nil-IFP.
\end{proof}

\begin{pro} Let $S$ be a multiplicative closed subset of a ring $R$ consisting of central regular elements. Then $R$ is strongly nil-IFP ring if and only if $S^{-1}R$ is strongly nil-IFP.
\end{pro}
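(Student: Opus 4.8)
The plan is to prove both directions by relating the strongly nil-IFP condition in $R$ to that in the localization $S^{-1}R$, using the fact that $S$ consists of central regular elements so that nilpotency transfers cleanly. The backward direction is immediate: since $R$ embeds in $S^{-1}R$ as a subring (the elements $1^{-1}a$) and the class of strongly nil-IFP rings is closed under subrings by Proposition \ref{pro9}(1), if $S^{-1}R$ is strongly nil-IFP then so is $R$.

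For the forward direction, I would start with $F(x), G(x) \in (S^{-1}R)[x]$ satisfying $F(x)G(x) = 0$. The first step is to clear denominators: because $S$ is central and multiplicatively closed, I can choose a single $u \in S$ so that $uF(x) = f(x) \in R[x]$ and a single $v \in S$ so that $vG(x) = g(x) \in R[x]$, with all coefficients of $f$ and $g$ lying in $R$. Here I would record that each coefficient of $F(x)$ has the form $u^{-1}a$ with $a \in C_{f(x)}$, and similarly each coefficient of $G(x)$ has the form $v^{-1}b$ with $b \in C_{g(x)}$. Since $u, v$ are central and regular, multiplying $F(x)G(x) = 0$ through by $uv$ gives $f(x)g(x) = (uv)F(x)G(x) = 0$ in $R[x]$.

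Now I invoke the hypothesis that $R$ is strongly nil-IFP: from $f(x)g(x) = 0$ we obtain $a r b \in N(R)$ for all $a \in C_{f(x)}$, $b \in C_{g(x)}$, and all $r \in R$. The key transfer step is to show that an arbitrary product of the form $(u^{-1}a)\,\sigma\,(v^{-1}b)$ with $\sigma \in S^{-1}R$ lies in $N(S^{-1}R)$. Writing $\sigma = w^{-1}r$ with $w \in S$, $r \in R$, and pushing all the central units to the front, this product equals $(uvw)^{-1}(a r b)$; raising $arb$ to the power that annihilates it and using that central regular elements contribute only an invertible scalar shows $\bigl((uvw)^{-1}(arb)\bigr)^{k} = (uvw)^{-k}(arb)^{k} = 0$ once $(arb)^{k}=0$. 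Hence every such product is nilpotent, so each coefficient of $F(x)$ times any element of $S^{-1}R$ times each coefficient of $G(x)$ lies in $N(S^{-1}R)$, which is exactly the strongly nil-IFP condition for $S^{-1}R$.

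The step I expect to require the most care is the clearing-of-denominators bookkeeping together with the claim that the coefficients of $F(x)$ are precisely the $u^{-1}$-scalings of the coefficients of $f(x)$; one must confirm that no coefficient is lost or merged when passing between $F$ and $f$, and that the same $u$ works simultaneously for all coefficients (which is where finiteness of the coefficient set and centrality of $S$ are used). The nilpotency transfer itself is routine once centrality and regularity of the denominators are in hand, since those elements behave as invertible central scalars inside $S^{-1}R$ and therefore neither create nor destroy nilpotency.
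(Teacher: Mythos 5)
Your proposal is correct and follows essentially the same route as the paper: clear denominators to get $f(x)g(x)=0$ in $R[x]$, apply the strongly nil-IFP hypothesis to obtain $arb\in N(R)$, and then use centrality and regularity of the elements of $S$ to pull the inverses out front and transfer nilpotency to $(u^{-1}a)\sigma(v^{-1}b)$ in $S^{-1}R$; the converse via closure under subrings is the standard argument the paper relies on as well. Your write-up is in fact slightly more careful than the paper's about the common-denominator bookkeeping and the nilpotency transfer.
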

\begin{proof} Let $R$ be a strongly nil-IFP. Let $P(x), Q(x) \in S^{-1}R[x]$ for $P(x) = u^{-1}p(x)$ and $Q(x) = v^{-1}q(x)$ such that $P(x)Q(x) = 0$, where $u, v \in S$. This implies $p(x)q(x) = 0$. Since, $R$ is strongly nil-IFP, so $arb \in N(R)$, for each $a \in C_{p(x)}, b \in C_{q(x)}$ and for all $r \in R$. Therefore, $u^{-1}aS^{-1}Rv^{-1}b \subseteq N(S^{-1}R)$ for each $a \in C_{P(x)}, b \in C_{Q(x)}$. Hence, $S^{-1}R$ is a strongly nil-IFP.
\end{proof}

\begin{pro} Let $R$ be a finite subdirect sum of strongly nil-IFP rings. Then $R$ is strongly nil-IFP.
\end{pro}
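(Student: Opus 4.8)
The plan is to recall what it means for $R$ to be a finite subdirect sum of strongly nil-IFP rings, and then reduce the statement to closure of strongly nil-IFP under subrings (Proposition \ref{pro9}(1)) together with closure under finite direct sums (Proposition \ref{pro9}(2)). By definition of a finite subdirect sum (equivalently, a finite subdirect product), there exist strongly nil-IFP rings $R_1, R_2, \ldots, R_n$ and surjective ring homomorphisms $\pi_i : R \to R_i$ such that the induced map
\[
\pi : R \longrightarrow \bigoplus_{i=1}^{n} R_i, \qquad \pi(r) = (\pi_1(r), \pi_2(r), \ldots, \pi_n(r))
\]
is an \emph{injective} ring homomorphism. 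Thus $R$ is isomorphic to its image $\pi(R)$, which is a subring of the direct sum $\bigoplus_{i=1}^{n} R_i$.

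First I would invoke Proposition \ref{pro9}(2): since each $R_i$ is strongly nil-IFP and $n$ is finite, the direct sum $\bigoplus_{i=1}^{n} R_i$ is strongly nil-IFP. Next I would invoke Proposition \ref{pro9}(1): the class of strongly nil-IFP rings is closed under subrings, so the subring $\pi(R)$ of $\bigoplus_{i=1}^{n} R_i$ is strongly nil-IFP. Finally, since $\pi$ is injective, $R \cong \pi(R)$, and the strongly nil-IFP property is clearly preserved under ring isomorphism (it is defined purely in terms of the ring structure via polynomials and the nilpotent set $N(R)$). Hence $R$ is strongly nil-IFP.

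The only genuine point requiring care—and hence the \textbf{main obstacle}—is verifying that a finite subdirect sum really does embed as a subring of the finite direct sum, i.e.\ pinning down the definition so that the reduction to Proposition \ref{pro9} is legitimate. This is essentially the standard characterization of subdirect products: a ring $R$ is a subdirect product of the $R_i$ precisely when there are surjections $\pi_i : R \to R_i$ whose common kernel $\bigcap_{i=1}^n \ker \pi_i$ is zero, which is exactly the injectivity of $\pi$. Once this identification is made, no further computation is needed. I would therefore keep the proof short: state the embedding $R \hookrightarrow \bigoplus_{i=1}^{n} R_i$, and then cite Proposition \ref{pro9}(2) followed by Proposition \ref{pro9}(1) to conclude. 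If one wished to avoid relying on the abstract characterization, one could alternatively argue directly: given $f(x)g(x) = 0$ in $R[x]$, push it through each $\pi_i$ to get $\overline{f}(x)\,\overline{g}(x) = 0$ in $R_i[x]$, apply the strongly nil-IFP property in each $R_i$ to obtain $\pi_i(a r b) \in N(R_i)$ for all coefficients $a, b$ and all $r \in R$, and then use injectivity of $\pi$ together with $N(\bigoplus R_i) = \bigoplus N(R_i)$ to conclude $a r b \in N(R)$.
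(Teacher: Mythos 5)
Your proposal is correct, but it takes a different route from the paper. The paper works directly with the ideal formulation of a finite subdirect sum: it takes ideals $I_{1},\ldots,I_{k}$ of $R$ with $\bigcap_{l=1}^{k} I_{l}=0$ and each $R/I_{l}$ strongly nil-IFP, reduces $f(x)g(x)=0$ modulo each $I_{l}$ to get $(a_{i}rb_{j})^{s_{ijl}}\in I_{l}$, and then combines the exponents into a single $s_{ij}$ so that $(a_{i}rb_{j})^{s_{ij}}\in\bigcap_{l}I_{l}=0$. You instead use the equivalent embedding characterization, $R\hookrightarrow\bigoplus_{i=1}^{n}R_{i}$ with surjective projections, and then simply cite Proposition \ref{pro9}: closure under finite direct sums followed by closure under subrings. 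Both are valid; your reduction is shorter and reuses machinery already established in the paper, while the paper's argument is self-contained and makes the uniform nilpotency exponent explicit (the only point where any care is needed, since the exponent a priori depends on $l$; taking the product, maximum, or sum of the $s_{ijl}$ all work because each $I_{l}$ is an ideal). Note that the ``alternative direct argument'' you sketch at the end --- pushing $f(x)g(x)=0$ through each $\pi_{i}$ and using $N\bigl(\bigoplus R_{i}\bigr)=\bigoplus N(R_{i})$ together with injectivity of $\pi$ --- is essentially the paper's proof restated in the embedding language, so the two approaches coincide at that level.
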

\begin{proof} Let $I_{l}$, where $1 \leq l \leq k$ be ideals of $R$ such that $R/I_{l}$ be strongly nil-IFP and $\bigcap_{l=1}^{k}I_{l} = 0$. Suppose $f(x) = \Sigma_{i=0}^{m}a_{i}x^{i}$ and $g(x) = \Sigma_{j=0}^{n}b_{j}x^{j}$ $\in R[x]$ such that $f(x)g(x) = 0$. This implies $\overline{f(x)}\overline{g(x)} = \overline{0}$. Then there exist $s_{ijl} \in N$ such that $(a_{i}Rb_{j})^{s_{ijl}} \subseteq I_{l}$. Set $s_{ij} = s_{ij1}s_{ij2}s_{ij3}\ldots s_{ijk}$, then $(a_{i}Rb_{j})^{s_{ij}} \subseteq I_{l}$ for any $l$. This shows that $(a_{i}rb_{j})^{s_{ij}} = 0$ for each $i, j$ and for all $r \in R.$ Hence, $R$ is strongly nil-IFP.
\end{proof}

\begin{thm} Let $R$ be an algebra over commutative domain $S$ and $D$ be Dorroh extension of $R$ by $S$. Then $R$ is strongly nil-IFP if and only if $D$ is strongly nil-IFP.
\end{thm}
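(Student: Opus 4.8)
The plan is to treat the two implications separately, with the forward direction essentially free and the reverse direction carrying all the content. For the implication ``$D$ strongly nil-IFP $\Rightarrow R$ strongly nil-IFP'' I would simply observe that $\iota\colon R\to D$, $r\mapsto (r,0)$, is an injective ring homomorphism whose image $\{(r,0):r\in R\}$ is a (two-sided ideal, hence) subring of $D$; indeed $(r_1,0)(r_2,0)=(r_1r_2,0)$, so the copy of $R$ is closed under the operations of $D$. By Proposition~\ref{pro9}$(1)$ the class of strongly nil-IFP rings is closed under subrings, so $R$ inherits the property.

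For the converse, suppose $R$ is strongly nil-IFP and take $F(x),G(x)\in D[x]$ with $F(x)G(x)=0$; write $F=\sum_i(a_i,\alpha_i)x^i$ and $G=\sum_j(b_j,\beta_j)x^j$ with $a_i,b_j\in R$ and $\alpha_i,\beta_j\in S$. The decisive first step is to exploit that $S$ is a domain: the projection $\pi\colon D\to S$, $(r,s)\mapsto s$, is a ring homomorphism, so
\[ \pi(F)\,\pi(G)=\pi(FG)=0 \quad\text{in } S[x]. \]
Since $S[x]$ is an integral domain, either $\pi(F)=0$ or $\pi(G)=0$; that is, one of the two polynomials has all of its scalar components zero and hence lies in the ideal $R\subseteq D$. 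This reduces the problem to two symmetric cases, and I would carry out $\pi(F)=0$ in detail.

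In that case $F=\sum_i(a_i,0)x^i$, and comparing the $R$-components of $F(x)G(x)=0$ gives the genuine relations in $R$
\[ \sum_{i+j=k}\bigl(a_ib_j+\beta_j a_i\bigr)=0 \qquad (k\ge 0). \]
These are exactly the coefficient relations of $\bar f(x)\bar g(x)=0$ for the companion polynomials $\bar f(x)=\sum_i a_i x^i$ and $\bar g(x)=\sum_j(b_j+\beta_j 1)x^j$ over $R$, the scalar $\beta_j$ being absorbed through the identity. Applying the strongly nil-IFP property of $R$ to $\bar f,\bar g$ yields $a_i\,r\,(b_j+\beta_j 1)\in N(R)$ for every $r\in R$, and specialising $r=c+\gamma 1$ gives $a_i(c+\gamma 1)(b_j+\beta_j 1)\in N(R)$. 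A direct expansion identifies this single element with the $R$-component of the triple product,
\[ (a_i,0)(c,\gamma)(b_j,\beta_j)=\bigl(a_i(c+\gamma 1)(b_j+\beta_j 1),\,0\bigr). \]
Because $S$ is a domain one checks that $N(D)=\{(w,0):w\in N(R)\}$ (an element $(r,s)$ has $(r,s)^n=(\,\cdot\,,s^n)$, which can vanish only if $s=0$), so nilpotency of the $R$-component forces $(a_i,0)(c,\gamma)(b_j,\beta_j)\in N(D)$. The case $\pi(G)=0$ is handled identically, factoring instead as $(a_i+\alpha_i 1)(c+\gamma 1)b_j$, and together the two cases give $(a_i,\alpha_i)(c,\gamma)(b_j,\beta_j)\in N(D)$ for all coefficients and all $(c,\gamma)\in D$, i.e.\ $D$ is strongly nil-IFP.

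I expect the main obstacle to be the careful bookkeeping of the scalar components and the precise point at which the identity is used to absorb $\beta_j$ (and, symmetrically, $\alpha_i$) into the companion polynomials; this is the one place where a genuinely non-unital $R$ would demand extra care. The structural trick that keeps the argument honest is to present the $R$-component as the \emph{single} product $a_i(c+\gamma 1)(b_j+\beta_j 1)$ rather than as a sum of nilpotents: this lets me quote the strongly nil-IFP conclusion of $R$ verbatim, without needing to know that $N(R)$ is closed under addition, which for a general strongly nil-IFP ring is not available.
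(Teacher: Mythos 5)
Your proof is correct and takes essentially the same route as the paper's: project to $S[x]$, use that $S$ is a domain to force the scalar component of one factor to vanish, absorb the remaining scalars into $R$ through the identity so that each coefficient relation becomes a single product $a_i(c+\gamma 1)(b_j+\beta_j 1)$ (resp.\ $(a_i+\alpha_i 1)(c+\gamma 1)b_j$), and apply the strongly nil-IFP hypothesis of $R$. You are in fact slightly more careful than the paper, which omits the verification that $N(D)=\{(w,0):w\in N(R)\}$ and writes ``$\in N(R)$'' where ``$\in N(D)$'' is meant.
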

\begin{proof} Let $D$ be a strongly nil-IFP. Since $D$ is a trivial extension of $R$ by $S$. Therefore, $R$ is strongly nil-IFP.\\
 Conversely, let $R$ be strongly nil-IFP. Let $f(x) = \sum_{i=0}^{m}(a_{i}, b_{i})x^{i} = (f_{1}(x), f_{2}(x))$ and $g(x) = \sum(c_{j}, d_{j})x^{j} = (g_{1}(x), g_{2}(x)) \in D[x]$ such that $f(x)g(x) = 0$ where $f_{1}(x) = \sum_{i=0}^{m}a_{i}x^{i},$ $f_{2}(x) = \sum_{i=0}^{m}b_{i}x^{i},$ $g_{1}(x) = \sum_{j=0}^{n}c_{j}x^{j}$ and $g_{2}(x) = \sum_{j=0}^{n}d_{j}x^{j}$. From $f(x)g(x) = 0$, we have

\begin{equation}\label{4}
 f_{1}(x)g_{1}(x)+f_{1}(x)g_{2}(x)+f_{2}(x)g_{1}(x) = 0, f_{2}(x)g_{2}(x) = 0.
\end{equation}
Since $S$ is a domain, therefore either $f_{2}(x) = 0$ or $g_{2}(x) = 0$. \\
\textbf{Case 1.} If $f_{2}(x) = 0$, then from equation (\ref{4}), we have $f_{1}(x)g_{1}(x)+f_{1}(x)g_{2}(x) = 0$ and this implies $f_{1}(x)(g_{1}(x)+g_{2}(x)) = 0$. Since $R$ is strongly nil-IFP, therefore $a_{i}R(c_{j}+d_{j}) \subseteq N(R)$. Hence $(a_{i}, 0)(r, s)(c_{j}, d_{j}) = (a_{i}(r+s)c_{j}+a_{i}(r+s)d_{j}, 0)\in N(R)$ for each $i, j$ and any $(r, s) \in D$. Thus, $D$ strongly nil-IFP.\\
\textbf{Case 2.} If $g_{2}(x) = 0$, then from equation (\ref{4}), we have $(f_{1}(x)+f_{2}(x))g_{1}(x) = 0$. Since $R$ is strongly nil-IFP, therefore $(a_{i}+b_{i})Rc_{j} \subseteq N(R)$ . Hence $(a_{i}, b_{i})(r, s)(c_{j}, 0) = (a_{i}+b_{i}(r+s)c_{j}, 0) \in N(R)$, for each $i, j$ and any $(r, s) \in D$. Thus, $D$ is a strongly nil-IFP.
\end{proof}
Given Example \ref{ex3}, $R$ is a strongly nil IFP but not an abelian. The following example shows that abelian ring  need not be strongly nil IFP.
\begin{ex}(\cite{DDD}, Example (12)) Let $K$ be a field and $A = [a_{0}, a_{1}, b_{0}, b_{1}]$ be the free algebra generated by noncommuting indeterminates $a_{0}, a_{1}, b_{0}, b_{1}$. Let $I$ be an ideal of $A$ generated by $a_{0}b_{0}, a_{0}b_{1}+a_{1}b_{0}, a_{1}b_{1}$ and $R = A/I$. Let $f(x) = a_{0}+a_{1}x, g(x) = b_{0}+b_{1}x$ such that $f(x)g(x) = 0$. But $a_{0}b_{1}b_{1}$ is not nilpotent. Therefore, $R$ is not strongly nil-IFP. By Example 12 of \cite{DDD} $R$ is an abelian ring.
\end{ex}
%%%%%%%%%%%%%%%%%%%%%%%%%%%%%%%%%%%%%%%%%%%%%%%%%%%%%%%%%%%%%%%%%%%%%%%%%%%%%%%%%%%%%%%%

\section*{Acknowledgement}
The authors are thankful to Indian Institute of Technology Patna for providing financial support and research facilities.
%%%%%%%%%%%%%%%%%%%%%%%%%%%%%%%%%%%%%%%%%%%%%%%%%%%%%%%%%%%%%%%%%%%%%%%%%%%%%%%%%%%%%%%%%%%%%%%%%%%%%%%%%%%%%%%%%%%%%%%%%%%%%%%%%%%%%%%%%%%%%%%%%%%%%%%%%

\end{document}